\pdfoutput=1
\documentclass{amsart}
\usepackage{longtable}

\usepackage{mathrsfs}

\usepackage{hyperref}
\hypersetup{
    colorlinks=true,       % false: boxed links; true: colored links
    linkcolor=blue,          % color of internal links
    citecolor=blue,        % color of links to bibliography
    filecolor=blue,      % color of file links
    urlcolor=blue           % color of external links
}
\usepackage[alphabetic,backrefs,lite]{amsrefs}

\usepackage[alphabetic,backrefs,lite]{amsrefs}
\usepackage{todonotes}
\usepackage{booktabs}
\usepackage{pdflscape}

\usepackage[all]{xy}
\usepackage[utf8]{inputenc}
\usepackage{pgf,tikz}
\usetikzlibrary{arrows}
\usetikzlibrary{matrix,decorations.pathreplacing}

\numberwithin{equation}{section}

\theoremstyle{plain}
\newtheorem{lemma}{Lemma}[section]
\newtheorem{theorem}[lemma]{Theorem}
\newtheorem*{introthm}{Theorem}
\newtheorem{proposition}[lemma]{Proposition}

\theoremstyle{definition}
\newtheorem{definition}[lemma]{Definition}

\newtheorem{construction}[lemma]{Construction}

\theoremstyle{remark}
\newtheorem{remark}[lemma]{Remark}

\newcommand{\cc}{\mathbb{C}}

\newcommand{\pp}{\mathbb{P}}
\newcommand{\ff}{\mathbb{F}}
\newcommand{\qq}{\mathbb{Q}}

\newcommand{\zz}{\mathbb{Z}}

\newcommand{\Osh}{{\mathcal O}}

\newcommand{\R}{{\mathcal R}}

\newcommand{\Cone}{\operatorname{Cone}}

\newcommand{\Pic}{\operatorname{Pic}}
\newcommand{\Cl}{\operatorname{Cl}}

\newcommand{\Nef}{\operatorname{Nef}}
\newcommand{\Mov}{\operatorname{Mov}}

\newcommand{\NE}{\operatorname{NE}}

\newcommand{\mw}{\operatorname{MW}}

\newcommand{\id}{\operatorname{id}}

\begin{document}

\title{On cubic elliptic varieties}

\author[J.~Hausen]{J\"urgen Hausen}
\address{Mathematisches Institut, Universit\"at T\"ubingen,
Auf der Morgenstelle 10, 72076 T\"ubingen, Germany}
\email{juergen.hausen@uni-tuebingen.de}

\author[A.~Laface]{Antonio Laface}
\address{
Departamento de Matem\'atica,
Universidad de Concepci\'on,
Casilla 160-C,
Concepci\'on, Chile}
\email{alaface@udec.cl}

\author[A.~L.~Tironi]{Andrea Luigi Tironi}
\address{
Departamento de Matem\'atica,
Universidad de Concepci\'on,
Casilla 160-C,
Concepci\'on, Chile}
\email{atironi@udec.cl}

\author[L.~Ugaglia]{Luca Ugaglia}
\address{
Dipartimento di Matematica e Informatica,
Universit\`a degli studi di Palermo,
Via Archirafi 34,
90123 Palermo, Italy}
\email{luca.ugaglia@unipa.it}

\subjclass[2010]{14C20}

\begin{abstract}
Let $\pi:X\to \pp^{n-1}$ be an elliptic fibration 
obtained by resolving the indeterminacy of the 
projection of a cubic hypersurface $Y$ of $\pp^{n+1}$
from a line $L$ not contained in $Y$. We prove
that the Mordell-Weil group of $\pi$ is finite
if and only if the Cox ring of $X$ is finitely generated.
We also provide a presentation of the Cox ring of $X$
when it is finitely generated.
\end{abstract}
\maketitle

\section*{Introduction}
Let $\pi: X\to Z$ be an elliptic fibration between
smooth projective complex varieties which admits
a section. The generic fiber $X_\eta$ of $\pi$
is an elliptic curve over the function field
of $Z$ and its group of rational points (the
Mordell-Weil group of $\pi$) reflects
into the geometry of $X$. It is thus interesting
to explore the relation between the Mordell-Weil
group of $\pi$ and the Cox ring of the 
variety $X$. In this paper we 
focus on a class of elliptic fibrations defined
by the linear system
$|-\frac{1}{n-1}K_X|$, where $X$ is a certain
blowing-up of a smooth cubic $n$-dimensional
hypersurface.
Inspired by the recent work~\cite{CP} we
classify such fibrations according to their
Mordell-Weil groups and prove the following.
\begin{introthm}\label{moridream}
Let $X$ be an elliptic variety, of dimension at 
least three, obtained
by resolving the indeterminacy of the
projection of a smooth cubic hypersurface from a line.
Then the following are equivalent:
\begin{enumerate}
\item the Cox ring of $X$ is finitely generated;
\item the Mordell-Weil group of the elliptic fibration
is finite.
\end{enumerate}
Moreover when (1) and (2) hold we provide an explicit
presentation for the Cox ring in Theorem~\ref{teor}.
\end{introthm}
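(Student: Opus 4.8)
The plan is to prove the two implications separately; the substantive part is $(2)\Rightarrow(1)$, which amounts to exhibiting the explicit presentation of Theorem~\ref{teor}. Throughout write $X=\mathrm{Bl}_{Y\cap L}Y$ for the blow-up that resolves the projection, with exceptional divisors $E_1,\dots,E_r$, put $F=-\frac{1}{n-1}K_X=\pi^*\Osh_{\pp^{n-1}}(1)$ for the fibre class, and note that $\Cl(X)=\Pic(X)=\zz H\oplus\bigoplus_i\zz E_i$ is free of finite rank, since $\Pic(Y)=\zz H$ by the Grothendieck--Lefschetz theorem as $\dim Y\ge 3$. At least one $E_i$ is a $\pi$-section, so $\mw(\pi)$ is defined, and (one checks) $\pi$ is never isotrivial, so $\mw(\pi)$ is a finitely generated abelian group.

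\textbf{$(1)\Rightarrow(2)$.} Since $X$ is smooth and projective with $\Cl(X)$ finitely generated and torsion-free, finite generation of $\operatorname{Cox}(X)$ is equivalent to $X$ being a Mori dream space. In that case $\Eff(X)$ is rational polyhedral and the group $\PsAut(X)$ of pseudo-automorphisms acts on $\Cl(X)$ through a finite group, because it permutes the finitely many extremal rays of $\Eff(X)$ and a lattice automorphism fixing all of them is the identity. On the other hand $K_X$ is a negative multiple of the fibre class, so $\pi$ is relatively minimal and each $P\in\mw(\pi)$, acting as translation on the generic fibre, extends to a pseudo-automorphism $\tau_P$ of $X$; the map $P\mapsto\tau_P$ is an injective homomorphism $\mw(\pi)\hookrightarrow\PsAut(X)$, and since $\tau_P^*$ sends the class of a section $Q$ to that of $P+Q$, it has infinite order in $\mathrm{GL}(\Cl(X))$ whenever $P$ has infinite order. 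This contradicts the finiteness above, so $\mw(\pi)$ is torsion, hence finite.

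\textbf{$(2)\Rightarrow(1)$.} Assume $\mw(\pi)$ finite. By the Shioda--Tate equality the rank of $\Cl(X)$ is then entirely accounted for by the zero section, the fibre class, the finitely many components of fibres lying over divisors of $\pp^{n-1}$, and the torsion sections, so $\Cl(X)$ is ``combinatorial'', which is what makes an explicit presentation possible. The strategy is to realize $X$ as a hypersurface in a smooth projective toric variety $W$ with $\Cl(W)\xrightarrow{\sim}\Cl(X)$: one starts from the toric variety $\mathrm{Bl}_L\pp^{n+1}$, which carries the $\pp^2$-bundle $\mathrm{Bl}_L\pp^{n+1}\to\pp^{n-1}$ extending the projection, and performs a sequence of toric blow-ups along strict transforms of linear centres through $Y\cap L$, arranging in the torsion case that the torsion-section classes become toric as well, until the strict transform of $Y$ is $X$ and the restriction of class groups is an isomorphism. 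Writing the cubic equation of $Y$ in the Cox coordinates of $W$ produces a polynomial $\widehat f$, and the standard description of Cox rings of well-embedded subvarieties of toric varieties then gives
\[
\operatorname{Cox}(X)\;\cong\;\operatorname{Cox}(W)\big/(\widehat f)\;=\;\cc[\text{Cox coordinates of }W]\big/(\widehat f),
\]
which is finitely generated; running through the finitely many contact types of $L$ with $Y$ and the possible torsion subgroups of $\mw(\pi)$ yields exactly the presentation of Theorem~\ref{teor}.

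\textbf{Main obstacle.} The difficulty lies entirely in the second implication: verifying that the toric ambient $W$ can be chosen with $\Cl(W)\cong\Cl(X)$, that $X$ meets the torus-invariant strata of $W$ in the expected codimension, and that $\operatorname{Cox}(X)$ carries no generators or relations beyond those imposed by $\widehat f$ and the toric relations — in particular that nothing extra is hidden in the reducible fibres or among the torsion sections. Finiteness of $\mw(\pi)$ is exactly what unlocks this: it forces the effective cone to be polyhedral, pins $\rk\Cl(X)$ down via Shioda--Tate so that the finite model $W$ exists, and reduces everything to a finite case analysis; an infinite Mordell--Weil group would produce infinitely many independent sections and infinitely many extremal effective classes, and then no finite presentation — and, as in $(1)\Rightarrow(2)$, no Mori dream structure — can exist.
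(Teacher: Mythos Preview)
Your proposal diverges from the paper in both directions, and in each there is a real issue.

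For $(1)\Rightarrow(2)$ the paper's argument (Lemma~\ref{cmw}) is far more direct than yours: every section $\Sigma$ of $\pi$ is an irreducible divisor with $h^0(X,\Sigma)=1$, so its defining section is an irreducible homogeneous element of $\R(X)$ and must occur in any generating set; distinct sections are linearly inequivalent (they restrict to distinct points on $X_\eta$), so an infinite $\mw(\pi)$ forces infinitely many generators. Your route via $\PsAut(X)$ is plausible in outline but has a gap: you assert that translation by $P$ extends to a pseudo-automorphism of $X$ ``since $\pi$ is relatively minimal'', yet the paper's notion of relative minimality (no $(-1)$-curves on the surface over a general line, Definition~\ref{co1}) does not by itself yield this in dimension $\ge 3$. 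Without the pseudo-automorphism property you have no well-defined action of $\tau_P$ on $\Cl(X)$, and the rest of the argument does not start.

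For $(2)\Rightarrow(1)$ the paper does \emph{not} deduce finite generation from the presentation. It applies the Hu--Keel criterion directly: the finite-$\mw$ types are $X_3$, $X_S$, $X_{S2}$, $X_{SSS}$ by Table~\ref{types}; $\Nef(X)$ is computed with semiample generators (Proposition~\ref{nefX}); and $\Mov(X)$ is shown to equal $\Nef(X)$ for $X_S$, $X_{SSS}$ (Proposition~\ref{SSS3S}) or $\Nef(X)\cup\varphi^*\Nef(X)$ for an explicit flop $\varphi$ in the remaining two types (Propositions~\ref{floppic} and~\ref{3}). Theorem~\ref{teor} is then a separate, harder computation. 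Your sketch conflates these two steps and is wrong in detail: the assertion $\operatorname{Cox}(X)\cong\operatorname{Cox}(W)/(\widehat f)$ with a single $\widehat f$ fails for $X_3$, $X_{S2}$ and $X_{SSS}$, where the ideal has two generators and there is an extra Cox generator $T_{n+3}$ of degree $2H-3E_1$, $2H-3E_1-3E_2$, $H-3E_3$ respectively (see $\mathfrak{I}_1,\mathfrak{I}_3,\mathfrak{I}_4$); and Shioda--Tate does not ``pin down $\rk\Cl(X)$'' here, since $\rk\Pic(X)=4$ for all seven types regardless of $\mw(\pi)$. The ``Main obstacle'' you flag --- that no further generators or relations appear --- is exactly what Section~4 establishes, not via a neat-embedding shortcut but through Proposition~\ref{criterion}, a comparison of the candidate ring $R_n$ with $\R(X)$ in one carefully chosen very-ample degree, and an induction on $n$.
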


Observe that if $X$ is as in the statement of the
preceding theorem, $D$ is a general divisor in
the linear system $|-mK_X|$, for $m>1$ and 
$\Delta=\frac{1}{m}D$, then $(X,\Delta)$ 
is a {\em klt Calabi-Yau pair} (see~\cite{CP}). 
As a byproduct of the theorem we obtain that
if the fibration $X$ has finite Mordell-Weil group,
then the Morrison-Kawamata cone conjecture
for klt Calabi-Yau pairs holds.

The paper is structured as follows. In Section~1  
we prove some facts about elliptic fibrations
$\pi: X\to\pp^{n-1}$ with a section and such that 
$-K_X$ is a multiple of the preimage of a general 
hyperplane of $\pp^{n-1}$. In Section~2 we introduce
a particular case of elliptic fibration, i.e.
the blow up of a cubic hypersurface $Y$ 
of $\pp^{n+1}$ along the intersection $Y\cap L$
with a line not contained in $Y$ and we state
some general results about it.
In Section~3 we study the nef and
moving cones of these varieties and 
we finally prove that, for these cubic elliptic varieties
the finite generation of the Cox ring is equivalent
to the finiteness of the Mordell-Weil group of $\pi$.
Finally in Section~4 we give a presentation for the
Cox ring of $X$ when it is finitely generated.

\section{Elliptic fibrations}

Let $X$ be a smooth projective variety
and let $\pi: X\to\pp^{n-1}$ be an {\em elliptic
fibration}, that is a general fiber of $\pi$
is a smooth irreducible curve of genus one.
\begin{definition}\label{co1}
The fibration $\pi$ is {\em jacobian}
if it admits a section. If this is the case
the  {\em Mordell-Weil
group} of $\pi$ is the group of sections
$\sigma:\pp^{n-1}\to X$, that is
\[
 \mw(\pi)
 :=
 \{\sigma:\pp^{n-1}\to X : \pi\circ\sigma=\id\}.
\]
We say that the fibration $\pi:X\to\pp^{n-1}$
is {\em extremal} if its Mordell-Weil group
is finite.
Moreover we say that
$\pi$ is {\em relatively minimal} if, for a general
line $R$ of $\pp^{n-1}$ the restriction of $\pi$
to the elliptic surface $S=\pi^{-1}(R)$ does
not contract $(-1)$-curves.

\end{definition}

Observe that by the Riemann-Roch theorem
the set of sections of $\pi$
is in bijection with the group $\Pic^0(X_\eta)$,
where $X_\eta$ is the generic fiber of
$\pi$. 

\begin{proposition}
Let $\pi: X\to\pp^{n-1}$ be a jacobian elliptic fibration
and assume that $K_X$ is linearly equivalent to 
$\alpha\,\pi^*\Osh_{\pp^{n-1}}(1)$, where $\alpha$
is a rational number.
Then $\pi$ is relatively minimal and $\alpha$
is integer.
Moreover, if $S$ is the preimage of a general
line of $\pp^{n-1}$, then the following are equivalent:
\begin{enumerate}
\item $S$ is a rational surface;
\item $\alpha = 1-n$.
\end{enumerate}
\end{proposition}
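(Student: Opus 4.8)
The plan is to restrict $\pi$ over a general line $R\subset\pp^{n-1}$ and read everything off the theory of elliptic surfaces. Write $H:=\pi^*\Osh_{\pp^{n-1}}(1)$. Since $|H|$ is base point free, Bertini shows that a general complete intersection of $n-2$ members of $|H|$ is a smooth projective surface, and for general $R$ this surface is $S=\pi^{-1}(R)$ (if $n=2$ take $S=X$). The general fibre of $f:=\pi|_S\colon S\to R\cong\pp^1$ is a smooth genus one curve, and restricting a section of $\pi$ to $R$ exhibits $f$ as a jacobian elliptic surface. By iterated adjunction $K_S\sim(K_X+(n-2)H)|_S\sim(\alpha+n-2)F$, where $F$ is the class of a fibre of $f$, namely the restriction of $H$ to $S$; this is an honest linear equivalence after clearing denominators, should $\alpha$ not yet be known to be integral.

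First I would prove relative minimality. If $f$ failed to be relatively minimal, some fibre of $f$ would contain a $(-1)$-curve $E$. Then $E\cdot F=0$ because $E$ is a fibre component, whereas adjunction on $S$ gives $K_S\cdot E=-1$. On the other hand $K_S\cdot E=(\alpha+n-2)(F\cdot E)=0$, a contradiction. Hence $f$ is relatively minimal for the general line $R$, which by Definition~\ref{co1} means that $\pi$ is relatively minimal.

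Next, since $f$ is a relatively minimal jacobian elliptic surface over $\pp^1$ it has no multiple fibres (a section meets every fibre in a single reduced point), so the canonical bundle formula yields $K_S\sim(\chi(\Osh_S)-2)F$. Comparing with $K_S\sim(\alpha+n-2)F$ and intersecting with an ample divisor — which forces $a=b$ whenever $aF\sim bF$, as a fibre meets an ample divisor positively — we obtain $\alpha=\chi(\Osh_S)-n\in\zz$, so $\alpha$ is an integer.

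It remains to prove the equivalence. If $S$ is rational, then $\chi(\Osh_S)=\chi(\Osh_{\pp^2})=1$ by birational invariance of $\chi(\Osh)$, hence $\alpha=1-n$. Conversely, if $\alpha=1-n$, then $\chi(\Osh_S)=1$, so $K_S\sim-F$; since $-mF$ has negative intersection with an ample divisor it is not effective for $m\ge1$, so all plurigenera of $S$ vanish and in particular $p_g(S)=0$, whence $q(S)=1-\chi(\Osh_S)+p_g(S)=0$. Castelnuovo's rationality criterion ($q=P_2=0$) then shows that $S$ is rational. The only genuinely nontrivial inputs are the canonical bundle formula for elliptic surfaces and Castelnuovo's criterion; the rest is bookkeeping with adjunction and intersection numbers, and I expect no obstacle beyond keeping track of the possibly non-integral coefficient $\alpha$ until its integrality has been established.
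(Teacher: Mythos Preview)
Your proof is correct. The setup (restricting to $S=\pi^{-1}(R)$ and computing $K_S\sim(\alpha+n-2)F$ by adjunction) and the relative minimality argument coincide with the paper's. Where you diverge is in the remaining three points. The paper obtains integrality of $\alpha$ by intersecting $K_S$ with the section $\Gamma$ (using $\Gamma\cdot F=1$); proves $(1)\Rightarrow(2)$ by a Riemann--Roch computation showing $\Gamma$ is a $(-1)$-curve, whence $\Gamma\cdot K_S=-1$; and proves $(2)\Rightarrow(1)$ via the classification of surfaces with $\kappa=-\infty$ together with $K_S^2=0$. You instead invoke the canonical bundle formula $K_S\sim(\chi(\Osh_S)-2)F$ once and read off everything from the single identity $\alpha=\chi(\Osh_S)-n$: integrality is immediate, $(1)\Rightarrow(2)$ reduces to $\chi(\Osh_S)=1$ for rational surfaces, and $(2)\Rightarrow(1)$ follows from Castelnuovo. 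Your route is more uniform and conceptually cleaner, at the cost of importing Kodaira's canonical bundle formula; the paper's argument is more elementary, needing only adjunction, Riemann--Roch on a curve in $S$, and surface classification.
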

\begin{proof}
Consider a general flag of linear subspaces of $\pp^{n-1}$. The
corresponding preimages via $\pi$ give a flag of subvarieties
$F_i$ of $X$
\[
 X\supset F_{n-1}\supset\dots\supset F_2=S\supset F_1=f,
\]
where $\dim F_i=i$ for any $i$
and $f$ is a fiber of $\pi$. By hypothesis
and the adjunction formula we get $K_S\sim (n-2+\alpha)f$. If
$C$ is a $(-1)$-curve of $S$, then $-1=C\cdot K_S=C\cdot
(n-2+\alpha) f$ implies that $C$ can not be contained in a fiber
of $\pi$, so that $\pi$ is relatively minimal. Moreover, observe
that given a section $\sigma$, the curve 
$\Gamma=\sigma(\pp^{n-1})\cap S$ is a section of
$\pi|_S$, so that $\Gamma\cdot f=1$. Hence 
$n-2+\alpha=\Gamma\cdot K_S$ is integer
so that $\alpha$ is integer too.

$(1)\Rightarrow (2)$.
Since $S$ is a rational surface and
$K_S\sim (n-2+\alpha) f$, then
$\alpha\leq 1-n$ and in particular
$\Gamma\cdot K_S<0$. 
Observe that the divisor $K_S-\Gamma$
can not be linearly equivalent to an 
effective divisor since $(K_S-\Gamma)\cdot f=-1$.
Hence $h^2(S,\Gamma)=0$,
by Serre's duality. Moreover, since $\Gamma$ is a
section of $\pi|_S$ we have $h^0(S,\Gamma)=1$.
Hence by Riemann-Roch
\[
 1=h^0(S,\Gamma)
 \geq
 \chi(S,\Gamma)
 =
 \frac{\Gamma^2-\Gamma\cdot K_S}{2}+1
\]
which implies $\Gamma^2\leq\Gamma\cdot K_S<0$. Thus $\Gamma$ is a
$(-1)$-curve and in particular $n-2+\alpha=\Gamma\cdot K_S=-1$
giving $\alpha=1-n$.

$(2)\Rightarrow (1)$.
Since $\alpha=1-n$ then $K_S\sim -f$, so that
$S$ has negative Kodaira dimension.
By the classification theory of surfaces $S$ 
is either rational or the blowing-up of a ruled surface. 
Since $K_S^2=0$, by~\cite[\S5,
Cor. 2.11]{Ha} we conclude that $S$ is rational.
\end{proof}

\begin{proposition}\label{sample}
Let $\pi: X\to\pp^{n-1}$ be a jacobian elliptic fibration
and assume that $K_X$ is linearly equivalent to a negative
multiple of the pull-back of $\Osh_{\pp^{n-1}}(1)$.
Then any nef effective divisor of $X$ is semiample.
\end{proposition}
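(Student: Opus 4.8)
The plan is to exploit the negativity of $K_X$ to run a Mori-theoretic argument on a nef effective divisor $D$. Write $K_X \sim -m H$ with $m > 0$, where $H = \pi^*\Osh_{\pp^{n-1}}(1)$; by the previous proposition we already know $m$ is a positive integer and $\pi$ is relatively minimal. Since $-K_X = mH$ is nef and big (it is the pull-back of an ample class composed with a fibration of positive relative dimension, so it is nef, and $H^{n-1}\cdot(\text{fiber class})>0$ gives bigness), the variety $X$ is a weak Fano, hence a Mori dream space by the Birkar--Cascini--Hacon--M\textsuperscript{c}Kernan theorem; in particular $\Cox(X)$ is finitely generated and every nef divisor is semiample. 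That would be a very short proof, but it presumably proves too much for the paper's narrative — so more likely the intended argument is the direct one via the base-point-free technique, which I now sketch.

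First I would reduce to the case that $D$ is big. If $D$ is nef but not big, then some positive power of $D$ defines a fibration $X \to W$ with $\dim W < \dim X$ after replacing $D$ by a multiple; concretely, $D^{\dim X} = 0$, and one argues that $D$ is pulled back from a lower-dimensional variety. The cleanest route here: since $\rho(X)$ and the structure are controlled by $\pi$, a nef non-big $D$ must restrict to a non-big nef divisor on a general line-preimage surface $S$, and on the elliptic surface $S$ (which is relatively minimal over $\pp^1$) semiampleness of nef divisors is classical — one invokes the fact that on a relatively minimal elliptic surface the nef cone is generated by semiample classes (Zariski-type / elliptic surface theory, cf. the base-point-free theorem for surfaces). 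Patching this back up to $X$ via the fibration structure and the fact that $D\cdot f \in \{0,1,\dots\}$ on a fiber $f$ gives semiampleness of $D$.

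Second, if $D$ is nef and big, I would apply the Kawamata--Shokurov base-point-free theorem: for the klt (indeed smooth) pair $(X,0)$ with $-K_X$ nef, the divisor $aD - K_X = aD + mH$ is nef and big for every $a \geq 0$ (sum of a nef and a nef-and-big divisor), hence $aD - K_X$ is big and nef, so $D$ is semiample by the base-point-free theorem. This is the heart of the argument and is essentially immediate once the big case is isolated.

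The main obstacle is the non-big case: showing that a nef divisor $D$ with $D^{\dim X}=0$ is actually pulled back from a lower-dimensional base (so that one can induct or reduce to the surface case). I expect the authors handle this by restricting to the general surface $S = \pi^{-1}(R)$, using that $K_S \sim -f$ makes $S$ a rational elliptic-type surface whose nef cone is polyhedral and generated by semiample classes, and then using that a divisor on $X$ nef-restricting-to-semiample on every such $S$ together with the fibration $\pi$ is semiample on $X$ — the gluing being controlled because the $S$'s sweep out $X$ and meet along fibers. The alternative, cleaner obstacle-avoiding route is simply to cite weak-Fano $\Rightarrow$ Mori dream space, which disposes of both cases at once; I would present the base-point-free argument as the main line and remark that finiteness of the relevant cones follows.
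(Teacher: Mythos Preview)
Your central error is the claim that $-K_X$ is big. Here $-K_X \sim mH$ with $H = \pi^*\Osh_{\pp^{n-1}}(1)$, and since $\pi$ has one-dimensional fibers we have $H^n = 0$; equivalently, $H^{n-1}$ is a multiple of a fiber class $f$ and $H\cdot f = 0$. So $-K_X$ is nef but \emph{not} big, $X$ is not weak Fano, and the BCHM shortcut collapses. The same mistake contaminates your ``$aD - K_X$ is nef and big for every $a \geq 0$'': at $a=0$ this is false. (When $D$ itself is big and nef the conclusion $D - K_X$ big and nef does hold, for the right reason that $D$ contributes the bigness; so that subcase survives, but your stated justification does not.)

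For the remaining case your sketch is too vague to count as a proof, and the paper's argument is both sharper and different in its dichotomy. The paper splits on whether $D - K_X$ (not $D$) is big. If it is, Kawamata--Shokurov gives semiampleness immediately. If not, then $(D-K_X)^n = 0$; since every term in the expansion is a product of nef classes and hence non-negative, each term vanishes, in particular $D\cdot(-K_X)^{n-1} = 0$. But $(-K_X)^{n-1}$ is a positive multiple of the fiber class, so the effective divisor $D$ has degree zero on fibers and its support is the preimage of a hypersurface in $\pp^{n-1}$. Finally, restricting to a general line-preimage surface $S$ and invoking \cite[Lemma~8.2]{BPV} (a vertical divisor on a relatively minimal elliptic surface that is not a rational pullback has negative self-intersection) forces $D$ to be an honest pullback from $\pp^{n-1}$, hence semiample. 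Your idea of restricting to $S$ is in the right direction, but the key step you are missing is the intersection-theoretic deduction that $D$ is vertical, which is what makes the surface lemma applicable.
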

\begin{proof}
Let $D$ be a nef effective divisor of $X$. 
Since both $D$ and $-K_X$ are nef, then
$D-K_X$ is nef. If $D-K_X$ is also big, then
$D$ is semiample by the Kawamata--Shokurov 
base point free theorem
(see~\cite{KMM} and~\cite{Shokurov}). 
If $D-K_X$ is not big, then $(D-K_X)^n=0$ 
and in particular $D\cdot(-K_X^{n-1})=0$. 
By hypothesis $-K_X^{n-1}$ is rationally
equivalent to a positive multiple of a fiber of $\pi$.
Hence, since $D$ is effective, its support 
is the preimage of a hypersurface of $\pp^{n-1}$.

We conclude by showing that $D$ 
is the pull-back of a divisor of $\pp^{n-1}$,
so that it is semiample.
Indeed if this is not the case, let $S$ be the preimage
of a general line of $\pp^{n-1}$. Then 
$(D|_S)^2<0$, by~\cite[Lemma 8.2]{BPV},
a contradiction since $D$ is nef.
\end{proof}

\section{Generalities on cubic elliptic varieties}
From now on we will concentrate on the case in which
$X$ is obtained from a cubic hypersurface $Y$
of $\pp^{n+1}$
by resolving the indeterminacy locus of the projection map
from a line $L$ non contained in $Y$.
Therefore the variety $X$
comes with two morphisms:
\[
 \xymatrix@C=5pt{
  X\ar[r]^-\pi\ar[d]_-\sigma & \pp^{n-1}\\
  \hspace{1.3cm}Y\subset\pp^{n+1}
 }
\]
where $\pi$ is the elliptic fibration
while $\sigma$ is the resolution of
the indeterminacy.
Observe that the fibers
of $\pi$ are the strict transforms
of the plane cubics cut out on $Y$
by planes containing $L$.

\begin{remark}\label{bu}
The birational morphism $\sigma$
is composition of three blowing-ups

\[
 \xymatrix{
  X\ar[r]^-{\sigma_3}\ar@/_{10pt}/[rrr]_\sigma &
  Y_2\ar[r]^-{\sigma_2} &
  Y_1\ar[r]^-{\sigma_1} &
  Y
 }
\]
at the points $p_1,p_2,p_3$.
There are three possibilities (modulo a relabelling
of the three points):
\begin{enumerate}
\item the points $p_2$ and $p_3$ do not lie
on the exceptional divisors;
\item $p_2$ lies on the exceptional
divisor of $\sigma_1$;
\item $p_2$ lies on the exceptional
divisor of $\sigma_1$ and $p_3$
on that of $\sigma_2$.
\end{enumerate}
\end{remark}

In what follows we denote by
$H$ the pull-back of a hyperplane
of $Y$ and by $E_i$ the exceptional
divisor of $\sigma_i$, for $i\in\{1,2,3\}$.
In case (1) each $E_i$ is isomorphic
to $\pp^{n-1}$. In case (2) the prime divisor
$E_1-E_2$ is isomorphic to 
the projectivization $\mathbb{F}$ of the vector bundle
$\Osh_{\pp^{n-1}}\oplus\Osh_{\pp^{n-1}}(1)$ while 
$E_2$ and $E_3$ are both isomorphic
to $\pp^{n-1}$. 
Finally in case (3) the prime divisors
$E_1-E_2$ and $E_2-E_3$ are isomorphic to 
$\ff$ while $E_3$ is isomorphic to $\pp^{n-1}$.
In each case 
\[
 \Pic(X) = \langle H, E_1, E_2, E_3\rangle,
\]
where, with abuse of notation, we are adopting
the same symbols for the divisors and for their
classes.

\subsection{Cubic elliptic varieties}
Let us recall the following definition
(see~\cite{CC}):
\begin{definition}
Given a hypersurface $Y$ of $\pp^{n+1}$ of degree $d$,
a smooth point $p$ of $Y$ is said to
be a {\em star point} if $T_pY\cap Y$ has multiplicity $d$ at $p$.
\end{definition}

Let us consider now the local
study of a cubic $Y$ at
a smooth point $p$. After applying
a linear change of coordinates we
can assume $p=(0:\ldots:0:1)$ and the
equation of the tangent space to $Y$
at $p$ to be $T_{n+1}=0$. Hence a defining
equation for $Y$ is
\begin{equation}\label{eq-cubic}
 T_{n+1}\,a+T_{n+2}\,b(T_1,\ldots,T_n)+c(T_1,\ldots,T_n) = 0,
\end{equation}%\luc{direi che dovrebbe essere $T_4\,g_2 + T_5\,f_2 + f_3$}
where $a$ is a degree two homogeneous
polynomial while $b$ and $c$ are
homogeneous polynomials of $\cc[T_1,\ldots,T_n]$
of degrees two and three respectively.
Observe that $c$ can not be the zero
polynomial, since otherwise $Y$ would
contain the linear space $V(T_{n+1},T_{n+2})$ being
singular.

Observe that any line $R$ of $Y$ through $p$ is
contained in the tangent space
$T_pY$ and so it is contained in the
intersection of the two cones
$V(b)\cap V(c)$.
\begin{proposition}\label{flexes}
Let $Y$ be a smooth cubic hypersurface
of $\pp^{n+1}$, let $p$ be a point of $Y$.
Assume that a local equation of $Y$ at $p$
is~\eqref{eq-cubic}.
Then the following properties hold:
\begin{enumerate}
\item $p$ is a star point of $Y$
if and only if $b$ is the zero polynomial;
\item if $p$ is not a star point then there 
is a $(n-3)$-dimensional family of lines 
of $Y$ passing through it;
\item a line through two star points
of $Y$, intersects $Y$ at a third star point.
\end{enumerate}
\end{proposition}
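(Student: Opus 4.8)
The plan is to read all three statements off the local model~\eqref{eq-cubic}, keeping the coordinates for which $p=(0:\dots:0:1)$ and $T_pY=V(T_{n+1})$; recall that $n\ge 3$ in the situation of interest. Write $\alpha$ for the coefficient of $T_{n+2}^2$ in $a$. Since $b,c$ involve only $T_1,\dots,T_n$, the only a priori nonzero partial of $F$ at $p$ is $\partial F/\partial T_{n+1}(p)=\alpha$, so smoothness of $Y$ at $p$ forces $\alpha\neq 0$ (and confirms $T_pY=V(T_{n+1})$). For~(1) I restrict $F$ to $T_pY=V(T_{n+1})$, obtaining $T_{n+2}\,b+c$, whose zero locus is $T_pY\cap Y$ inside $T_pY\cong\pp^n$; in the chart $T_{n+2}=1$ centered at $p$ this is $b+c$, the sum of its homogeneous parts of degree $2$ and degree $3$, the latter nonzero. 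Hence $T_pY\cap Y$ has multiplicity $3$ at $p$ exactly when $b=0$, which is~(1).

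For~(2) I use that, by substituting a line through $p$ into $F$ (as in the discussion preceding the statement), the lines of $Y$ through $p$ are parametrized by $V(b)\cap V(c)\subseteq\pp^{n-1}$. If $p$ is not a star point then $b\neq 0$ by~(1), and it suffices to show that $b$ and $c$ have no common factor, for then $V(b)$ and $V(c)$ meet properly and $V(b)\cap V(c)$ is nonempty of pure dimension $n-3$. A common factor would, since $\deg b=2$, make $F|_{V(T_{n+1})}=T_{n+2}\,b+c$ divisible by a linear form $m$ independent from $T_{n+1}$ — a common linear factor $\ell(T_1,\dots,T_n)$ of $b$ and $c$, or $m=T_{n+2}+\ell$ in case $c=b\ell$ — so $Y$ would contain the linear subspace $V(T_{n+1},m)\cong\pp^{n-1}$. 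But a smooth cubic in $\pp^{n+1}$ contains no linear $\pp^{n-1}$ when $n\ge 3$: writing it as $V(m_1,m_2)$ one has $F=m_1G_1+m_2G_2$ with $G_i$ quadratic, whence $\nabla F=G_1\nabla m_1+G_2\nabla m_2$ along $V(m_1,m_2)$ and $Y$ is singular along the nonempty set $V(m_1,m_2)\cap V(G_1,G_2)$ — the very argument used to see $c\neq 0$. This gives~(2).

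The substantive part is~(3), and the idea is to promote the ``third point of a line through $p$'' operation to an element of $\Aut(\pp^{n+1})$ when $p$ is a star point. By~(1) we then have $F=T_{n+1}\,a+c$ with $c\in\cc[T_1,\dots,T_n]$; writing $a=\alpha T_{n+2}^2+\beta\,T_{n+2}+\gamma$ with $\beta=\beta(T_1,\dots,T_{n+1})$ linear and $\gamma$ quadratic, I consider
\[
 L_p\colon\ (x_1:\dots:x_{n+2})\ \longmapsto\ \bigl(\alpha x_1:\dots:\alpha x_{n+1}:\ -\beta(x_1,\dots,x_{n+1})-\alpha x_{n+2}\bigr),
\]
an involution in $\Aut(\pp^{n+1})$ (determinant $-\alpha^{n+2}\neq 0$) fixing $p$. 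To see $L_p(Y)=Y$, fix $y=(y_1:\dots:y_{n+1})$ with $y_{n+1}\neq 0$: on the line $\{(y_1:\dots:y_{n+1}:t)\}\cup\{p\}$, the cubic $F$ restricts to the genuine quadratic $\alpha y_{n+1}t^2+\beta(y)\,y_{n+1}\,t+\bigl(y_{n+1}\gamma(y)+c(y_1,\dots,y_n)\bigr)$ in $t$, whose roots $t_1,t_2$ satisfy $t_1+t_2=-\beta(y)/\alpha$; since $-\beta(y)-\alpha t_i=\alpha t_{3-i}$, the map $L_p$ interchanges the two points of $Y\setminus\{p\}$ on that line. These points being dense in $Y$, we conclude $L_p(Y)=Y$. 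Thus $L_p$ is a linear automorphism of $Y$, so it sends each $T_xY\cap Y$ isomorphically onto $T_{L_p(x)}Y\cap Y$, preserving the multiplicity at the point, and therefore sends star points of $Y$ to star points.

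To finish~(3), let $p\neq q$ be star points and assume $\overline{pq}\not\subseteq Y$ (otherwise there is no residual intersection point). If $q$ were in $T_pY\cap Y$, then $\overline{pq}$ would be a ruling of the cone $T_pY\cap Y$ and lie in $Y$; hence $T_{n+1}(q)\neq 0$, and by the computation above $L_p(q)$ is exactly the third point $r$ of $\overline{pq}\cap Y$. As the $L_p$-image of the star point $q$, the point $r$ is a star point. The step I expect to be delicate is the verification that $L_p$ preserves $Y$: it is exactly here that the hypothesis ``$p$ is a star point'' is used, because for a non-star point the sum $t_1+t_2$ would pick up a term $b(y_1,\dots,y_n)/y_{n+1}$, the corresponding involution would fail to be linear, and the transport of embedded tangent hyperplanes underpinning the argument would break down.
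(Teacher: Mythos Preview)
Your proof is correct. Parts~(1) and~(2) follow the same idea as the paper, though your treatment of~(2) is considerably more careful: the paper simply declares the dimension count an ``immediate consequence'' of the description of lines through $p$ as $V(b)\cap V(c)$, whereas you actually verify that $b$ and $c$ share no factor by reducing a putative common factor to a linear $\pp^{n-1}$ inside the smooth cubic, which is then excluded by the standard Jacobian argument.

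For~(3) your route is genuinely different. The paper normalises coordinates simultaneously for \emph{both} star points $p_1,p_2$, so that $T_{p_1}Y=V(T_{n+1})$ and $T_{p_2}Y=V(T_{n+2})$, and then reads off from~(1) that $Y$ has equation $T_{n+1}T_{n+2}\,\ell + c(T_1,\dots,T_n)=0$; the third point is the one where $\ell$ vanishes, and one sees directly that $\ell=0$ is its tangent hyperplane. You instead fix only the one star point $p$ and build the linear involution $L_p\in\Aut(\pp^{n+1})$ realising the ``residual point on the line through $p$'' map; since $L_p$ preserves $Y$ it transports $T_qY\cap Y$ isomorphically to $T_{L_p(q)}Y\cap Y$, hence carries star points to star points. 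Each approach has its payoff: the paper's two--point normalisation produces the explicit normal form used later in Table~\ref{types} for type $X_{SSS}$, while your construction exhibits, in the star--point case, a \emph{linear} lift of the birational involution $\psi$ of Subsection~\ref{flop}, making transparent why the flop degenerates to a biregular automorphism precisely when $p$ is a star point.
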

\begin{proof}
Point $(2)$ is an immediate consequence of
our previous discussion, while $(1)$ follows
by observing that a general line tangent to 
$Y$ at $p$ has parametric equation
\[
 u\,(0,\dots,0,1) + v\,(t_1,\dots,t_n,0,t_{n+2}),
\]
where the $t_i$ are general complex
numbers.
By substituting in~\eqref{eq-cubic}
it follows that the left hand side is
a cubic polynomial in $u$ and $v$
and it has a root of multiplicity three
for any choice of the $t_i$ 
if and only if $b$ vanishes identically.

To prove $(3)$
first consider the case when $L$
is tangent to $Y$ at the star point
$p_1$. Then $L$ intersects $Y$ at $p_1$
with multiplicity three by definition o star point.
Assume now that $L$ intersects $Y$
at three distinct points $p_1,p_2,p_3$
such that $p_1$ and $p_2$ are star points.
After a linear change of coordinates
we can assume $p_1=(0:\ldots:0:1)$
with $T_{p_1}Y$ of equation $T_{n+1}=0$
and $p_2=(0:\ldots:0:1:0)$ with 
$T_{p_2}Y$ of equation $T_{n+2}=0$.
Using equation~\eqref{eq-cubic} and
(1) we get that a defining equation for $Y$ is
\[
 T_{n+1}T_{n+2}\,\ell + c(T_1,\dots,T_n)
 =
 0,
\]
where $\ell$ is a linear form. Hence 
$p_3=(0:\dots:0:\alpha:\beta)$, where
$\ell(p_3)=0$. The fact that $p_3$ is
a star point follows immediately from
the previous equation for $Y$, being
$\ell=0$ the equation of $T_{p_3}Y$.
\end{proof}

As a consequence of this result and
of Remark~\ref{bu} we have
that there are seven different possibilities
concerning the points $L\cap Y$. 
We are now going to construct a table 
in which we list the seven types of 
cubic elliptic varieties
we can obtain.
In the first column we write the type
of the variety using a symbol 
that recalls which points we
are blowing up and in which order.
For example if $X$ is a blowing-up
at three distinct non-star points, then we will denote it by
$X_{111}$, while if $X$ is blowing-up of one star point and two
non-star infinitely near points we will denote it by $X_{S2}$.

The second column contains the defining
equations of $Y$ and the line $L$ while the third column
is for the Mordell-Weil groups of the elliptic fibrations.

\begin{center}
\footnotesize
\begin{longtable}{clc}
\hline
Type & Defining equations for $Y$ and $L$ & Mordell-Weil group
\\
\hline
\\
$X_{3}$ &
\begin{tabular}{l}
$T_{n+1}(a'+T_{n+2}\,a_1)+T_{n+2}\,b'+b_1=0$\\
$T_1=\dots=T_{n-1}=T_{n+1}=0$
\end{tabular}
& 
$\langle 0\rangle$
\\
\hline
\\
$X_{S}$ & 
\begin{tabular}{l}
$T_{n+1}\,a_2+b_2=0$\\
$T_1=\dots=T_{n-1}=T_{n+1}=0$
\end{tabular}
& $\langle 0\rangle$
\\
\hline
\\
$X_{S2}$ &
\begin{tabular}{l}
$T_{n+1}\,a_3+b_3=0$\\
$T_1=\dots=T_{n}=0$
\end{tabular}
& $\zz/2\zz$
\\
\hline
\\
$X_{SSS}$ & 
\begin{tabular}{l}
$T_{n+1}\,T_{n+2}\,a_4+b_4=0$\\
$T_1=\dots=T_{n}=0$
\end{tabular}
& $\zz/3\zz$
\\
\hline
\\
$X_{12}$ & 
\begin{tabular}{l}
$T_{n+1}\,a_5+T_{n+2}\,b_5+c_5=0$\\
$T_1=\dots=T_{n-1}=T_{n+1}=0$
\end{tabular}
& $\zz$
\\
\hline
\\
$X_{S11}$ & 
\begin{tabular}{l}
$T_{n+1}\,a_6+b_6=0$\\
$T_1=\dots=T_{n}=0$
\end{tabular}
& $\zz$
\\
\hline
\\
$X_{111}$ & 
\begin{tabular}{l}
$T_{n+1}\,a_7+T_{n+2}\,b_7+c_7=0$\\
$T_1=\dots=T_{n}=0$
\end{tabular}
& $\zz\oplus\zz$
\\
\hline
\\
\caption{The seven types of cubic elliptic varieties}
\label{types}
\end{longtable}
\end{center}

Where $b', b_i,c_i\in\cc[T_1,\dots,T_n]$, 
$a'\in\cc[T_1,\dots,T_{n+1}]$,
$a_i\in\cc[T_1,\dots,T_{n+2}]$,
moreover $b'$ does not contain $T_n^2$
and $a_3$  does not contain
$T_{n+1}^2$ and $T_{n+1}\,T_{n+2}$.
The equations appearing in the table
can be obtained from~\eqref{eq-cubic}
with a case by case study of the tangency
conditions at the points of $L\cap Y$ (as we did
in the proof of Proposition~\ref{flexes} for
$X_{SSS}$).

\subsection{Mordell-Weil groups}
Recall that the Mordell-Weil group
of the elliptic fibration $\pi$ is the
group of sections of $\pi$ or equivalently
the group of $K=\cc(\pp^{n-1})$-rational
points $X_\eta(K)$ of the generic fiber $X_\eta$ of 
$\pi$ once we choose one of such points 
$O$ as an origin for the group law.
Let $\mathscr{T}$ be the subgroup of
$\Pic(X)$ generated by the classes of
{\em vertical divisors}, that is
divisors mapped to hypersurfaces by $\pi$,
and by the class of the section
$O$. There is an exact
sequence~\cite[Sec. 3.3]{W}:
\begin{equation}\label{mw}
 \xymatrix{
  0\ar[r] &
  \mathscr{T}\ar[r] &
  \Pic(X)\ar[r] &
  X_\eta(K)\ar[r] &
  0.
 }
\end{equation}

\begin{theorem}
The Mordell-Weil group of the elliptic
fibration for each type in Table~\ref{types}
is the one given in the third column.
\end{theorem}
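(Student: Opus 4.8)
The plan is to compute the Mordell--Weil group type by type using the exact sequence~\eqref{mw}, which identifies $X_\eta(K)$ with the quotient of $\Pic(X)$ by the subgroup $\mathscr T$ generated by the vertical divisor classes and the class of a chosen section $O$. Since by Remark~\ref{bu} we know $\Pic(X)=\langle H,E_1,E_2,E_3\rangle\cong\zz^4$, the computation reduces to: (i) identifying enough independent vertical divisor classes, (ii) identifying the class of a section $O$, and (iii) computing the cokernel of the resulting inclusion $\mathscr T\hookrightarrow\zz^4$. The generic fiber being an elliptic curve, $X_\eta(K)$ has rank equal to $\rk\Pic(X)-\rk\mathscr T$, and its torsion is read off from the Smith normal form of a matrix whose columns are the generators of $\mathscr T$ in the basis $H,E_1,E_2,E_3$.

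\smallskip

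First I would pin down the vertical classes. One obvious vertical divisor is $F:=\pi^*\Osh_{\pp^{n-1}}(1)$, the pull-back of a hyperplane; in terms of the blow-up description, $F$ is the strict transform of a hyperplane section of $Y$ through $L$, so $F\sim H-E_1-E_2-E_3$ (adjusting in the infinitely-near cases, where e.g. $F\sim H-E_1-2E_2-3E_3$ for $X_{SSS}$; the precise coefficients come from the order of tangency of $L$ with $Y$, exactly as encoded in the table's normal forms). The remaining vertical classes come from the reducible fibers of $\pi$: each exceptional divisor $E_i$ (or the components $E_i-E_{i+1}$ in the infinitely-near cases) meets the generic fiber trivially but is not itself a pull-back, so its class, or rather the class of the fiber component lying over the relevant point, enters $\mathscr T$. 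Concretely the components of the section locus $\sigma^{-1}(L\cap Y)$ that are horizontal give sections, while those that are vertical give extra generators of $\mathscr T$; distinguishing the two is precisely what the seven cases are about, and is governed by whether a point of $L\cap Y$ is a star point (Proposition~\ref{flexes}).

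\smallskip

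Next I would identify a section. In every case $L$ itself, or rather its strict transform, meets each fiber (a plane cubic through $L\cap Y$) in the residual point, giving a rational section; after the resolution this becomes a genuine section $O$, whose class is a $\zz$-combination of $H,E_i$ that I would write down case by case. Choosing $O$ as the origin, the torsion sections correspond to the further horizontal components of $\sigma^{-1}(L\cap Y)$: a star point $p_i$ contributes, via Proposition~\ref{flexes}(1) and (3), a $3$-torsion or $2$-torsion section (an inflection-type point of the plane cubic), which accounts for the $\zz/3\zz$ in $X_{SSS}$ (three star points, collinear, hence a full $\zz/3\zz$ of flexes on the generic cubic) and the $\zz/2\zz$ in $X_{S2}$. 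The non-star points, by Proposition~\ref{flexes}(2), carry a positive-dimensional family of lines of $Y$ through them, which produces a section of infinite order — one free $\zz$ summand per non-star point that survives as a section — giving $\zz$ for $X_{12}$ and $X_{S11}$ and $\zz\oplus\zz$ for $X_{111}$, while the purely vertical configurations $X_3$ and $X_S$ leave $\mathscr T$ of full rank $4$ and hence trivial Mordell--Weil group.

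\smallskip

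The main obstacle will be step (iii) done honestly: for each of the seven normal forms one must verify that the classes one has written down actually generate $\mathscr T$ (no hidden vertical divisor is missed, which would shrink the group) and compute the cokernel exactly, getting the claimed torsion. This is a bookkeeping exercise with the intersection form on the elliptic surface $S=\pi^{-1}(R)$ — using $S\sim F$, the adjunction $K_S\sim(n-2+\alpha)f$ from the first Proposition (here $\alpha=1-n$, so $K_S\sim-f$), and the fact that the section classes have self-intersection $-1$ on $S$ — so the real work is organizing the seven cases and confirming that the geometry of the star/non-star dichotomy translates into exactly the three types of contribution (trivial, torsion, free) described above. Once the generators of $\mathscr T$ and the section $O$ are correctly identified in each row, the entries of the third column follow by a direct Smith-normal-form computation.
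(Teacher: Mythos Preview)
Your overall strategy---compute $\Pic(X)/\mathscr{T}$ via the exact sequence~\eqref{mw}---is exactly the paper's, but several of your concrete identifications are wrong and would derail the computation.

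First, the section. The line $L$ is \emph{not} contained in $Y$, so there is no ``strict transform of $L$'' in $X$; your proposed section does not exist. The correct choice (and the paper's) is $O=E_3$: since $p_3$ is the last point blown up, $E_3\cong\pp^{n-1}$ maps isomorphically to the base. More generally, when the $p_i$ are distinct each $E_i$ is a section, and these are what generate the Mordell--Weil group.

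Second, the fiber class. Your formula $F\sim H-E_1-2E_2-3E_3$ for $X_{SSS}$ is incorrect: in $X_{SSS}$ the three star points are \emph{distinct} (Proposition~\ref{flexes}(3) and Remark~\ref{bu}), not infinitely near. In fact, with the paper's convention (the $E_i$ are total transforms), the pull-back of a hyperplane is $H-E_1-E_2-E_3$ in \emph{every} case. The infinitely-near types are $X_3$, $X_S$, $X_{12}$, $X_{S2}$, and there the extra vertical classes are $E_i-E_{i+1}$, not a modified $F$.

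Third, and most importantly, you are missing the mechanism that produces the extra vertical divisors from star points. A star point $p_i$ does not directly ``give a torsion section''; rather, the tangent hyperplane section $T_{p_i}Y\cap Y$ is a cone with vertex $p_i$, so its strict transform has class $H-3E_i$ and is a \emph{vertical} divisor (its image under $\pi$ is a hypersurface, and the fibers over it contain the lines through $p_i$). Thus star points enlarge $\mathscr{T}$ by the classes $H-3E_i$. Together with $\mathscr{L}=\langle H-E_1-E_2-E_3,\,E_3\rangle$ and, in the infinitely-near cases, the classes $E_i-E_{i+1}$, these generate $\mathscr{T}$; the quotient $\zz^4/\mathscr{T}$ is then read off directly. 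For instance $X_{SSS}$ has $\mathscr{T}=\mathscr{L}+\langle H-3E_1,H-3E_2\rangle$, giving $\zz/3\zz$, while $X_{111}$ has $\mathscr{T}=\mathscr{L}$, giving $\zz^2$.

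Finally, the point you flag as the ``main obstacle''---showing no vertical class is missed---is handled in the paper not by intersection theory on $S$ but by a direct dichotomy: a prime vertical divisor that is not a pull-back forces the plane cubics over its image to be reducible (hence contain a line through a star point, yielding $H-3E_i$) or irreducible but singular at a tangency point of $L$ (yielding $E_i-E_{i+1}$). This classification is what guarantees the list of generators of $\mathscr{T}$ is complete.
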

\begin{proof}
Let $X$ be one of the cubic elliptic varieties
appearing on the first column of
Table~\ref{types}. As already observed
in Section 2, the Picard group of
$X$ is free of rank four and is generated
by the classes of $H, E_1, E_2, E_3$.
Observe that since $p_3$
is the last point that we blow up
then $E_3$ gives a section of the
elliptic fibration $\pi$ so that from
now on we take $O=E_3$. 
The subgroup $\mathscr{T}$
has rank at least two, since it
contains the subgroup
\[
 \mathscr{L}
 =
 \langle H-E_1-E_2-E_3, E_3\rangle,
\]
where the first class is that of the pull-back of 
$\Osh_{\pp^{n-1}}(1)$.
Hence by~\eqref{mw} the Mordell-Weil 
group of $X$ has rank at most two. 

Consider now a prime vertical
divisor $D$ of $\pi$. By identifying
$D$ with its support we have that
$\pi(D)$ is a hypersurface $B$ of $\pp^{n-1}$.
If $D$ equals the pull-back $\pi^*B$
then it is linearly equivalent to a 
multiple of $H-E_1-E_2-E_3$.
If not, then any fiber $\Gamma=\pi^{-1}(q)$
over a point $q$ of $B$ is
reducible and has a component
contained in $D$. 
There are two possibilities for
the curve $C=\sigma(\Gamma)$, where
$\sigma: X\to Y$ is the blowing-up map:
\begin{enumerate}
\item $C$ is a reducible
cubic curve;
\item $C$ is an
irreducible singular cubic curve,
with singular point at
one of the points of $L\cap Y$.
\end{enumerate}
In the first case $C$ must contain a line,
so that one of the points $p$ of $L\cap Y$
is a star point and, named $E$ the 
corresponding exceptional divisor,
one of the irreducible components
of $\pi^*B$ is linearly equivalent to $H-3E$.
This shows that $\mathscr{T}=\mathscr{L}$
for $X_{111}$, that $\mathscr{T}=
\mathscr{L}+\langle H-3E_1\rangle$
for $X_{S11}$ and that $\mathscr{T}=
\mathscr{L}+\langle H-3E_1, H-3E_2\rangle$
for $X_{SSS}$.

In the second case $L$ is tangent to $Y$
at a point $p$ of $L\cap Y$.
Fibrations on the varieties $X_{12}$, $X_{3}$
and $X_{S}$ belong to this case.
We have $\mathscr{T}=
\mathscr{L}+\langle E_1-E_2\rangle$
for $X_{12}$ and $\mathscr{T}=
\mathscr{L}+\langle E_1-E_2, E_2-E_3\rangle$
for both $X_3$ and $X_{S}$.

Finally $X_{S2}$ belong to both cases
and we have $\mathscr{T}=
\mathscr{L}+\langle H-3E_1,E_2-E_3\rangle$.
We conclude by observing that the
Mordell-Weil group of each such elliptic
fibration is isomorphic to
$\Pic(X)/\mathscr{T}$.
\end{proof}

\subsection{A flop}\label{flop}
In this subsection we study a flop
image of the blowing-up $Y_1$ of a smooth
cubic hypersurface $Y$ of $\pp^{n-1}$
at a non-star point $p_1$.
The Picard group of $Y_1$ is free
of rank two generated by the classes
of the exceptional divisor $E$
and the pull-back $H$ of a hyperplane
section of $Y$.
Inside $\Pic(Y_1)\otimes_\zz\qq$
we have the following cones:\\

\begin{center}
\begin{tikzpicture}
\draw[step=1cm,gray,dashed]
(-2.2,-.2) grid (1.2,1.2);
\draw[line width=0mm,fill=gray!80!white] (0,1) -- (0,0) -- (-1,1);
\draw[line width=0mm,fill=gray!40!white] (-1,1) -- (0,0) -- (-1.5,1);
\foreach \x/\y in {0/1,1/0,-1/1,-1.5/1,-2/1}
 { \draw[-, very thick, black] (0,0) -- (\x,\y); }
\foreach \x/\y in {0/0,0/1,1/0,-1/1,-1.5/1,-2/1}
 { \fill[black] (\x,\y) circle (2pt); }
\node[above left] at (-2,1) {$S$};
\node[above right] at (1,0) {$E$};
\node[above right] at (0,1) {$H$};
\end{tikzpicture}
\end{center}
The cone generated by the 
classes of $H$ and $H-E$ is the
nef cone of $Y_1$, while the moving
cone is generated by the classes 
of $H$ and $H-\frac{3}{2}E$.
To prove this consider the birational
map
\begin{equation}\label{eq-flop}
 \psi: Y\to Y
 \qquad
 q\mapsto(\text{line}(p,q)\cap Y)-\{p,q\}.
\end{equation}
Denote by $\psi_1: Y_1\to Y_1$ the lift 
of $\psi$ to $Y_1$. Let $V$ be the strict
transform of the union of lines of $Y$ 
through $p$. Since $\psi_1$ is an 
involution whose indeterminacy locus
is $V$ and $V$ has codimension two
in $Y_1$, then $\psi_1$ is an isomorphism 
in codimension one. In particular it 
induces by pull-back an isomorphism
$\psi_1^*$ on the Picard group of $Y_1$.
To calculate the representative matrix
of $\psi_1^*$ with respect to the 
basis $(H,E)$, observe that $\psi$
maps points of the strict transform of
$T_pY\cap Y$ to points of the exceptional
divisor $E$ and viceversa.
The first divisor is linearly
equivalent to $H-2E$. Hence the
representative matrix for $\psi_1^*$
is:
\[
 \left(
 \begin{array}{rr}
  2  & 1\\
  -3 & -2
 \end{array}
 \right).
\]
The previous matrix explains the
$\zz/2\zz$-symmetry of the moving
and effective cones of $Y_1$.
If we blow up a set of points $Q$
on $Y_1$ then $\psi_1$ lifts to the blowing-up
if and only if $\psi_1(Q)=Q$.
This is exactly what happens for the
cubic elliptic varieties $X_{3}$ and $X_{S2}$.
In the first case each point is fixed by
$\psi_1$, while in the second case
the points $p_2$ and $p_3$ are exchanged.
This implies the following.
\begin{proposition}\label{floppic}
Let $X$ be a cubic elliptic variety of type
$X_3$ or $X_{S2}$ and let $\varphi:
X\to X$ be the flop induced by~\eqref{eq-flop}.
Then the action of
$\varphi^*$ on $\Pic(X)$
with respect to the basis $(H,E_1,E_2,E_3)$
is described respectively by the 
following two matrices
\[
M_3:=\left(
\begin{array}{rrrr}
 2 & 1 & 0 & 0\\
-3 & -2 & 0 & 0\\
 0 & 0 & 1 & 0\\
 0 & 0 & 0 & 1
\end{array}
\right)
\qquad
M_{S2}:=
\left(
\begin{array}{rrrr}
 2 & 1 & 0 & 0\\
-3 & -2 & 0 & 0\\
 0 & 0 & 0 & 1\\
 0 & 0 & 1 & 0
\end{array}
\right).
\]
\end{proposition}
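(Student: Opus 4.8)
The plan is to transport the action of $\psi_1^*$ on $\Pic(Y_1)$, computed above, up to $X$ through the blow-down $\tau:X\to Y_1$ obtained by composing $\sigma_2$ with $\sigma_3$. The first point is to check that $\varphi$ genuinely exists as a pseudo-automorphism of $X$: the involution $\psi_1:Y_1\to Y_1$ is an isomorphism in codimension one, since its indeterminacy locus $V$ has codimension two, and by the discussion preceding the statement it respects the configuration of further blow-up centres $\{p_2,p_3\}$ --- fixing both in the case $X_3$ and exchanging them in the case $X_{S2}$. Since blowing up a point commutes with an isomorphism near that point, $\psi_1$ lifts to an involutive birational self-map $\varphi:X\to X$, again an isomorphism in codimension one, so that $\varphi^*$ is a well-defined automorphism of $\Pic(X)=\langle H,E_1,E_2,E_3\rangle$.

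Next, $H$ and $E_1$ on $X$ are the pull-backs via $\tau$ of the two generators $H$ and $E$ of $\Pic(Y_1)$, and $\tau\circ\varphi=\psi_1\circ\tau$ holds by the very definition of the lift. Functoriality of pull-back of divisor classes --- legitimate here because $\varphi$ and $\psi_1$ are isomorphisms in codimension one while $\tau$ is a birational morphism --- then gives $\varphi^*\circ\tau^*=\tau^*\circ\psi_1^*$. Since $\psi_1^*$ sends $H$ to $2H-3E$ and $E$ to $H-2E$, as recalled above, we get $\varphi^*H=2H-3E_1$ and $\varphi^*E_1=H-2E_1$; neither class involves $E_2$ or $E_3$, so this already yields the first two columns of $M_3$ and of $M_{S2}$.

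It remains to compute $\varphi^*E_2$ and $\varphi^*E_3$. The key remark is that the prime divisors of $X$ contracted by $\tau$ are exactly the strict transforms of the exceptional divisors of $\sigma_2$ and $\sigma_3$, and that their classes span the sublattice $\langle E_2,E_3\rangle$ of $\Pic(X)$; since $\tau\circ\varphi=\psi_1\circ\tau$ and $\psi_1$ permutes the centres, the involution $\varphi$ permutes these two $\tau$-exceptional prime divisors. Hence $\varphi^*$ preserves $\langle E_2,E_3\rangle$ and acts on it as the identity when the centres are fixed (type $X_3$) and as the transposition when they are exchanged (type $X_{S2}$); this fills in the last two columns and produces $M_3$ and $M_{S2}$ respectively. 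The delicate part is exactly this last step in the infinitely near configurations: there the $\tau$-exceptional prime divisors carry classes such as $E_2-E_3$ and $E_3$ rather than $E_2$ and $E_3$ themselves, and one has to check that the lift $\varphi$ fixes, or swaps, the infinitely near point as well as its image on $Y_1$ --- a verification already implicit in the paragraph preceding the statement, and precisely what forces the off-diagonal blocks of the two matrices to vanish.
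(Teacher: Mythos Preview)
Your argument is correct and follows essentially the same route as the paper. The paper does not give a separate proof of this proposition; it is stated as an immediate consequence of the discussion in Subsection~\ref{flop}, where the matrix of $\psi_1^*$ on $\Pic(Y_1)$ is computed and it is observed that $\psi_1$ preserves the set $\{p_2,p_3\}$ (pointwise for $X_3$, by a transposition for $X_{S2}$). Your write-up simply makes explicit the commutative square $\tau\circ\varphi=\psi_1\circ\tau$ and the induced identity $\varphi^*\tau^*=\tau^*\psi_1^*$ to obtain the first two columns, and then tracks the permutation of the $\tau$-exceptional prime divisors to obtain the last two --- exactly the content of ``this implies the following'' in the paper. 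Your closing remark about the infinitely near case (where the prime $\tau$-exceptional classes are $E_2-E_3$ and $E_3$ rather than $E_2$ and $E_3$, and one must check that the successive lifts of $\psi_1$ still fix the infinitely near centre) is a point the paper glosses over, so your version is in fact slightly more careful.
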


\section{Nef and moving cones}
As a general reference about the 
cones discussed in this section
see~\cite{La}.

\begin{construction}\label{co-2}
In what follows we will write the classes
of $\Pic(X)$ with respect to the basis
$(H,E_1,E_2,E_3)$.
We fix the basis $(h,e_1,e_2,e_3)$ of
$A_1(X)$ such that the intersection pairing 
$\Pic(X)\times A_1(X)\to\zz$ in these coordinates
is given by
\[
 ((a,a_1,a_2,a_3),(b,b_1,b_2,b_3))
 \mapsto
 ab-a_1b_1-a_2b_2-a_3b_3.
\]
Observe that $h$ is the class of the
pull-back of a line of $Y$ and $e_3$
is the class of a line in the exceptional 
divisor $E_3$.
The geometric interpretation of the remaining elements
is the following. 
If we are blowing-up one point of $Y$
(cases $X_{3}$, $X_{S}$)
$e_2-e_3$  and $e_1-e_2$ are fibers
of the $\pp^1$-bundles
$E_2-E_3$ and $E_1-E_2$ respectively.
If we are blowing-up two points of $Y$
(cases $X_{12}$, $X_{S2}$)
$e_2$ is the class of a
line in the exceptional divisor $E_2$
while $e_1-e_2$ is a fiber of the $\pp^1$-bundle
$E_1-E_2$.
If we are blowing-up three
points of $Y$ (cases $X_{111}$, $X_{S11}$,
$X_{SSS}$) each $e_i$ is the class of a
line in the exceptional divisor $E_i$.
\end{construction}

\subsection{Nef cones} 
Let us compute now the nef cones of 
the cubic elliptic varieties of Table~\ref{types}.
In each case we will proceed as follows. 
We take some classes of nef divisors 
and we consider the cone $N$ they span.
Since the nef cone of $X$ is the dual of the 
Mori cone $\NE(X)$ of $X$ and $N$ is contained in the former, 
we deduce that the dual $N^*$ contains 
$\NE(X)$. We conclude by proving that the 
classes which generate $N^*$ are indeed classes
of effective curves and hence $\NE(X)=N^*$
so that $\Nef(X)=N$.

\begin{proposition}\label{nefX}
Let $X$ be one of the cubic elliptic varieties
of Table~\ref{types}. Then the
nef cone of $X$ is generated
by the semiample classes whose
coordinates with respect to the basis 
$(H,E_1,E_2,E_3)$ of $\Pic(X)$ 
are the columns of the corresponding 
matrix in the following table.\\

\begin{center}
\footnotesize
\begin{tabular}{ll}
\hline
{\rm Type} & {\rm Generators of the nef cone}
 \\
 \hline
 \\
 $X_3$, $X_{S}$ &
 $
 \left[
 \begin{array}{rrrr}
 1 & 1 & 1  & 1\\
 -1 & -1 & -1 & 0\\
 -1 & -1 & 0  & 0\\
 -1 & 0 & 0  & 0
 \end{array}
 \right]
 $
 \\
 \\
 \hline
 \\
 $X_{12}$, $X_{S2}$ &
 $
 \left[
 \begin{array}{rrrrrr}
  1 & 1 & 1 & 1 & 1 & 1\\
-1 &-1& -1& 0 &  -1 & 0\\
-1 & -1 & 0 & 0 & 0 & 0\\
-1 & 0 & 0 & 0 & -1 & -1
 \end{array}
 \right]
 $
 \\
 \\
 \hline
 \\
 $X_{111}$, $X_{S11}$, $X_{SSS}$ &
 $
 \left[
 \begin{array}{rrrrrrrr}
  1 & 1 & 1 & 1 & 1 & 1 & 1 & 1\\
-1 &-1& -1& 0 &  -1 & 0 & 0 & 0\\
-1 & -1 & 0 & 0 & 0 & 0 & -1 & -1\\
-1 & 0 & 0 & 0 & -1 & -1 & -1 & 0
 \end{array}
 \right]
 $
\end{tabular}
\end{center}

\end{proposition}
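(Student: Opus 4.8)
The plan is to follow exactly the recipe announced just before the statement: for each type $X$ we exhibit explicit nef divisors whose positive span is the cone $N$ in the table, then show that the dual cone $N^*$ is generated by classes of honest effective curves, whence $\NE(X)\subseteq N^*$ forces $\Nef(X)=N$.

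\textbf{Producing the nef generators.} First I would identify the columns of each matrix with concrete semiample divisors. The class $(1,-1,-1,-1)=H-E_1-E_2-E_3$ is the pull-back $\pi^*\Osh_{\pp^{n-1}}(1)$, hence base-point free. The class $(1,-1,-1,0)=H-E_1-E_2$ should be realized as the pull-back under $\sigma$ of a hyperplane section of $Y$ through the relevant points (or of an appropriate linear system on an intermediate blow-up $Y_1,Y_2$); similarly $H-E_1=(1,-1,0,0)$ and $H=(1,0,0,0)$ come from hyperplane sections of $Y$ through fewer, or none, of the points. These are base-point free because on a smooth cubic $Y\subset\pp^{n+1}$ (with $n\ge 3$, so $\dim Y\ge 3$) hyperplane sections through a point, or through two points, or through a length-two scheme, still have no further base locus; for the infinitely-near cases one checks base-point-freeness on the blown-up models using that $E_1-E_2$ (resp. $E_2-E_3$) is a projective bundle. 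The remaining, type-specific generators — e.g. $(1,-1,0,-1)$ and $(1,0,0,-1)$ for $X_{12},X_{S2}$, and for the three-point cases the columns ending in $-1$'s such as $(1,0,-1,-1)$, $(1,0,-1,0)$, $(1,-1,-1,0)$, $(1,-1,0,-1)$ — I would interpret as pull-backs of hyperplane sections of $Y$ through the subset of $\{p_1,p_2,p_3\}$ indicated by which $E_i$ appears with coefficient $-1$; in the star-point cases $X_S,X_{S11},X_{SSS}$ one additionally has the class $H-3E_i$ coming from the hyperplane $T_{p_i}Y$, but note this does not appear among the listed generators, which is consistent with $H-3E_i$ lying outside $N$ and being merely effective. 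Once each generator is seen to be base-point free, $N\subseteq\Nef(X)$ is immediate.

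\textbf{Computing $N^*$ and exhibiting effective curves.} Using the pairing of Construction~\ref{co-2}, $N^*$ is cut out by the inequalities "pairing with each generator $\ge 0$", and I would compute its extremal rays explicitly (a finite linear-algebra computation in $A_1(X)\otimes\qq\cong\qq^4$). The expected extremal curve classes are: $f=3h-e_1-e_2-e_3$, the class of a fiber of $\pi$ (the strict transform of a plane cubic through $L\cap Y$); $e_3$, a line in $E_3$; the relevant differences $e_1-e_2$, $e_2-e_3$ which are fibers of the $\pp^1$-bundles $E_1-E_2$, $E_2-E_3$ in the infinitely-near cases, or $e_2$, $e_1$ which are lines in $E_2$, $E_1$; and, crucially, the classes of lines of $Y$ through the blown-up points: a general line of $Y$ through a non-star point $p_i$ has strict transform of class $h-e_i$, while through a star point $p_i$ the line $L$ itself (or the line in $T_{p_i}Y$) meets $Y$ with multiplicity three at $p_i$, giving strict transform of class $h-3e_i$. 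Proposition~\ref{flexes}(2) guarantees such lines through a non-star point form an $(n-3)$-dimensional family for $n\ge 4$, and for $n=3$ one argues directly that such a line still exists; this is what makes $h-e_i$ effective. For the star cases, Proposition~\ref{flexes}(3) and the normal forms in Table~\ref{types} pin down the geometry so that the class $h-3e_i$ is effective. Checking that these finitely many effective classes are precisely the extremal rays of $N^*$, and that $N^*$ has no other rays, completes the argument via $\NE(X)\subseteq N^*$ and the reverse inclusion $N^*\subseteq\NE(X)$.

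\textbf{Main obstacle.} The routine part is the linear algebra identifying $N^*$ and verifying the pairings; the real work is \emph{geometric}: for each of the seven types, one must prove that the conjectured extremal curve classes are genuinely effective (and, conversely, that nothing forces $\Nef(X)$ to be strictly smaller than $N$, i.e.\ that no additional effective curve class spoils things — but this direction is automatic once $N\subseteq\Nef(X)$ is established, since then $\NE(X)\subseteq N^*$ and $\Nef(X)=(\NE(X))^*\supseteq N^{**}=N$). So the crux is the effectivity of the line classes $h-e_i$ and $h-3e_i$ and of the fibral classes, handled by a case-by-case analysis of the normal equations in Table~\ref{types} together with Remark~\ref{bu} on the structure of $\sigma$. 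I expect the infinitely-near cases ($X_3$, $X_S$, $X_{S2}$, $X_{S11}$ with $p_2$ or $p_3$ on exceptional divisors) to be the most delicate, since there one must work on the intermediate blow-ups $Y_1,Y_2$ and track how the projective-bundle structure of $E_1-E_2$ and $E_2-E_3$ produces the fibral extremal rays $e_1-e_2$, $e_2-e_3$; the flop description of Subsection~\ref{flop} (Proposition~\ref{floppic}) is the tool I would use to organize the $X_3$ and $X_{S2}$ computations, since $M_3$ and $M_{S2}$ act on the nef cone and should permute or reflect the generators in the first and second matrices of the table.
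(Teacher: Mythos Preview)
Your overall strategy matches the paper's exactly: exhibit the listed classes as semiample, compute the dual cone $N^*$, and show its extremal rays are effective curve classes. However, your expected list of extremal curves contains genuine errors that would leave a gap.

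First, the fiber class $f=3h-e_1-e_2-e_3$ is \emph{never} extremal in $N^*$; in the three-point cases it decomposes as $(h-e_1)+(h-e_2)+(h-e_3)$, and similarly in the other cases. More seriously, your treatment of star points is wrong: the line $L$ is by hypothesis \emph{not} contained in $Y$, so it gives no curve on $X$, and there is no effective curve of class $h-3e_i$. What you actually need is that $h-e_i$ is effective even when $p_i$ is a star point. This holds because a star point has \emph{more} lines of $Y$ through it, not fewer: the tangent section $T_{p_i}Y\cap Y$ is a cubic cone with vertex $p_i$, giving an $(n-2)$-dimensional family of lines of $Y$ through $p_i$, each with strict transform of class $h-e_i$. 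If you compute $N^*$ honestly you will find the extremal rays are exactly $\{e_1-e_2,\,e_2-e_3,\,e_3,\,h-e_1\}$ in the first row, $\{e_1-e_2,\,e_2,\,e_3,\,h-e_1,\,h-e_3\}$ in the second, and $\{e_1,\,e_2,\,e_3,\,h-e_1,\,h-e_2,\,h-e_3\}$ in the third; the star/non-star distinction plays no role in the Mori cone, and your $h-3e_i$ classes do not appear.

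Two smaller points. The flop of Proposition~\ref{floppic} is irrelevant to the nef cone; it only enters later for the moving cone, so invoking it here is a detour. And the paper's argument for semiampleness is cleaner than checking base-point-freeness case by case: every listed column equals $\gamma^*\gamma_*F$ for $F=H-E_1-E_2-E_3$ and $\gamma$ some composition of the $\sigma_i$, which is automatically semiample since $F$ is.
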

\begin{proof}
First of all observe that all the columns
of the previous matrices are degrees
of nef divisors (indeed semiamples) since
the class of $F=H-E_1-E_2-E_3$ is semiample
being the pull-back of $\Osh_{\pp^{n-1}}(1)$
and all the remaining columns are of
the form $\gamma^*\gamma_*F$ for 
some birational morphism $\gamma$
which is a composition of the contractions
$\sigma_i$.

We conclude by showing that the dual
of each cone generated by the columns of
the given matrices is contained in the
Mori cone of $X$, that is consists of
classes of effective curves.
In the first case the dual cone
is generated by the following
classes: $e_1-e_2,e_2-e_3,h-e_1,e_3$,
where $h-e_1$ is the class of the strict transform
of a line through the first point.
In the second case the dual cone is generated
by the classes $e_1-e_2,e_2,e_3,h-e_1,h-e_3$,
while in the third case it is generated
by the classes $e_1,e_2,e_3$
and $h-e_1,h-e_2,h-e_3$.
\end{proof}

\subsection{Moving cones}
We are now going to study the moving cones of the 
first four cubic elliptic varieties appearing in Table~\ref{types}.
\begin{proposition}\label{SSS3S}
If $X$ is of type $X_{S}$ or $X_{SSS}$,
then $\Mov(X)=\Nef(X)$.
\end{proposition}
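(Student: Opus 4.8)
The plan is to show the reverse inclusion $\Mov(X)\subseteq\Nef(X)$, since $\Nef(X)\subseteq\Mov(X)$ always holds. The strategy is to exploit that $X_S$ and $X_{SSS}$ have \emph{finite} Mordell--Weil group (by Table~\ref{types}, they are $\langle 0\rangle$ and $\zz/3\zz$), so one expects $X$ to be a Mori dream space, and in particular the moving cone should decompose into finitely many nef chambers of small birational models of $X$, all of which are obtained by composing the elementary contractions and flops available. First I would fix the nef cone from Proposition~\ref{nefX}: for $X_S$ it is generated by the four columns of the first matrix, and for $X_{SSS}$ by the eight columns of the third matrix. Then I would argue that any class on a facet of $\Nef(X)$ which would have to be crossed to enlarge to $\Mov(X)$ corresponds to a contraction that is either divisorial or of fiber type, so that no class beyond $\Nef(X)$ can be movable.

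More concretely, the key step is to examine the facets of $\Nef(X)$ and identify, for each facet, the associated face contraction $X\to X'$ together with a curve class generating the contracted extremal ray (these were written down in the proof of Proposition~\ref{nefX}: the rays $e_1-e_2$, $e_2-e_3$ or $e_2$, $e_3$, and $h-e_i$). For $X_S$ (blow-up of a single point, of star type) the extremal contractions at the boundary of $\Nef(X)$ contract either the exceptional divisors $E_3$, $E_2-E_3$, $E_1-E_2$ (divisorial, coming from the $e_3$, $e_2-e_3$, $e_1-e_2$ rays) or the pencil of lines through the blown-up star point giving a morphism to a lower-dimensional base (fiber type, coming from the $h-e_1$ ray). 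In neither situation does the model continue on the far side of the wall: a divisorial contraction is an endpoint of the movable cone in that direction, and a fiber-type contraction bounds $\Mov(X)$ as well. Hence every wall of $\Nef(X)$ is a wall of $\Mov(X)$, forcing $\Mov(X)=\Nef(X)$. The case $X_{SSS}$ is handled the same way: here all three blown-up points are star points, each contributing the same divisorial and fiber-type contractions, and crucially there is \emph{no} flop available, since the flop of Subsection~\ref{flop} requires blowing up a \emph{non}-star point (the birational involution $\psi_1$ is built from the non-star point $p_1$), which does not occur for $X_{SSS}$.

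The main obstacle I expect is the careful verification that no facet of $\Nef(X)$ is a flopping wall, i.e.\ that every extremal contraction from $X$ dual to a facet is genuinely divisorial or of fiber type rather than small. For the divisorial candidates this amounts to checking that the exceptional locus of the contraction associated to $e_i-e_{i+1}$ (respectively $e_3$, or $e_2$ in the two-point description) is the corresponding prime divisor $E_i-E_{i+1}$ (respectively $E_3$, $E_2$), which is clear from the description of these divisors as projective bundles over $\pp^{n-1}$ given in Section~2. For the $h-e_1$ ray one must check that the union of lines of $Y$ through the star point sweeps out a divisor of $X$ (equivalently that the corresponding contraction has positive-dimensional fibers in codimension one), so that crossing this wall is impossible because the relevant divisor class $H-3E_1$ is rigid. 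Once these two points are nailed down, assembling them into $\Mov(X)=\Nef(X)$ is immediate from the chamber decomposition of the movable cone of a Mori dream space, or can be argued directly: any movable class $D$ with $D\cdot\ell\le 0$ for one of the above effective curve classes $\ell$ must in fact be nef, because a strictly negative intersection with an $\ell$ spanning a divisorial or fiber-type extremal ray forces the stable base locus of $D$ to contain a divisor.
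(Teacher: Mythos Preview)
Your direct argument at the end is exactly the paper's proof: for each generator $\ell$ of the Mori cone, the curves of class $\ell$ sweep out a divisor, so any effective $D$ with $D\cdot\ell<0$ has that divisor in its stable base locus and hence is not movable. The paper simply lists these divisors explicitly: for $X_S$ the classes $e_1-e_2,\,e_2-e_3,\,e_3,\,h-e_1$ are swept by $E_1-E_2,\,E_2-E_3,\,E_3$ and the strict transform of the cubic cone $T_{p_1}Y\cap Y$; for $X_{SSS}$ the classes $e_i$ and $h-e_i$ are swept by $E_i$ and the strict transforms of $T_{p_i}Y\cap Y$.

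Two points in your write-up should be tightened. First, the $h-e_1$ ray is \emph{not} of fiber type: the lines of $Y$ through a star point do not cover $Y$, they sweep out exactly the divisor $T_{p_1}Y\cap Y$ (this is what ``star point'' means, and it is precisely the fact you state correctly later). So all four (resp.\ six) extremal rays have divisorial locus, and your later phrasing ``sweeps out a divisor'' is the right one. Second, the framing via the chamber decomposition of a Mori dream space is circular here: in the paper this proposition is an \emph{input} to Theorem~\ref{moridream}, which establishes that $X$ is Mori dream. So you should rely only on the direct argument, which is self-contained and is what the paper does.
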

\begin{proof}
Let $D$ be an effective divisor whose class 
does not lie in $\Nef(X)$. 
Hence $D\cdot C<0$ for some
curve $C$ which spans an extremal
ray of the Mori cone of $X$. 
We claim that the curves numerically 
equivalent to any such $C$ span a divisor.
Since this divisor must be contained into the stable
base locus of $D$ we get that the
class of $D$ does not belong to $\Mov(X)$
and this, together with the inclusion 
$\Nef(X)\subset\Mov(X)$ gives the thesis.

By the proof of Proposition~\ref{nefX} the
Mori cone of a variety of type $X_{S}$ is
generated by the following effective classes:
$e_1-e_2,e_2-e_3,h-e_1,e_3$. The curves 
numerically equivalent to these classes span
respectively $E_1-E_2$, $E_2-E_3$,
the strict  transform of the cubic cone 
$T_{p_1}Y\cap Y$ and
the exceptional divisor $E_3$.

The Mori cone of a variety of type $X_{SSS}$ is
generated by the following effective classes:
$e_1,e_2,e_3$ and $h-e_1,h-e_2,h-e_3$.
In these cases we obtain the divisors
$E_i$ and the strict transforms 
of the cubic cones $T_{p_i}Y\cap Y$ for
any $i\in\{1,2,3\}$, respectively.
\end{proof}

\begin{proposition}\label{3}
For any cubic elliptic variety $X$ of type
$X_3$ or $X_{S2}$, the moving cone is 
$\Mov(X)=\Nef(X)\cup\varphi^*\Nef(X)$,
where $\varphi$ is the flop of $X$
described in Proposition~\ref{floppic}.
\end{proposition}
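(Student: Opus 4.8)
The plan is to prove the two inclusions $\Mov(X)\subseteq \Nef(X)\cup\varphi^*\Nef(X)$ and $\Nef(X)\cup\varphi^*\Nef(X)\subseteq\Mov(X)$ separately, using the flop $\varphi$ of Proposition~\ref{floppic} together with the description of $\Nef(X)$ from Proposition~\ref{nefX}. For the easy inclusion: the nef cone consists of semiample, hence movable, classes, so $\Nef(X)\subseteq\Mov(X)$; and since $\varphi$ is an isomorphism in codimension one (being a flop), it sends movable classes to movable classes, so $\varphi^*\Nef(X)=\varphi^*\Mov(X)\subseteq\Mov(X)$ as well. For this one needs only that $\varphi$ is a small modification, which was established in Subsection~\ref{flop}.

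For the reverse inclusion I would argue as in the proof of Proposition~\ref{SSS3S}. Take a class $\delta\in\Pic(X)_\qq$ which lies in neither $\Nef(X)$ nor $\varphi^*\Nef(X)$; I want to exhibit a curve class $C$ such that every irreducible curve with class $C$ sweeps out a divisor, and such that this divisor is forced into the stable base locus of every effective representative of $\delta$. Concretely, $\Mov(X)$ is cut out by the conditions $D\cdot C\ge 0$ for those extremal curve classes $C$ whose numerically equivalent deformations cover only a divisor (not all of $X$). Using the matrices $M_3$, $M_{S2}$ of Proposition~\ref{floppic}, the wall separating $\Nef(X)$ from $\varphi^*\Nef(X)$ is the face spanned by $H$ and by the images under $\varphi^*$ of the generators lying on the corresponding facet of $\Nef(X)$; the extremal curve class $C_0=h-e_1$ contracted by the flop $\psi_1$ on $Y_1$ (whose deformations cover exactly the divisor $V$, the strict transform of the union of lines of $Y$ through $p_1$, of codimension two in $Y_1$ but whose preimage is a divisor in $X$) is the one whose non-negativity is automatic on $\Mov(X)$. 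So I would check, case by case for $X_3$ and $X_{S2}$ using the explicit generators of $\Nef(X)$ in the second block of Proposition~\ref{nefX} and the flop matrix, that $\Nef(X)\cup\varphi^*\Nef(X)$ is exactly the subcone of $\Eff(X)$ on which all the "divisorial" extremal curve classes are non-negative, while every class outside this union is negative on one such class whose deforming curves sweep a fixed divisor.

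The main obstacle I anticipate is the bookkeeping on the boundary of $\Mov(X)$: I must verify that $\Nef(X)$ and $\varphi^*\Nef(X)$ fit together along a common facet (so that their union is genuinely a convex cone and equals the movable cone, with no gap and no overlap beyond that facet), and that there are no further small modifications of $X$ producing additional chambers. The first point follows from computing $\varphi^*$ applied to the nef-cone generators of Proposition~\ref{nefX} and matching the resulting facet with a facet of $\Nef(X)$; the second follows because, away from the wall governed by $h-e_1$, every other extremal curve class of $X$ (namely $e_i$, $e_i-e_{i+1}$, or $h-e_j$ for the star points) deforms to cover a divisor that is rigid, so crossing any of those walls leaves the movable cone — i.e. $\varphi$ is the \emph{only} pseudo-automorphism not coming from an automorphism, and $\Mov(X)$ has exactly the two chambers $\Nef(X)$ and $\varphi^*\Nef(X)$. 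Once these two facts are in place, the equality $\Mov(X)=\Nef(X)\cup\varphi^*\Nef(X)$ is immediate.
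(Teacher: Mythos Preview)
Your easy inclusion is fine, and your overall strategy---bound $\Mov(X)$ by intersecting with curve classes whose deformations sweep out divisors---is the right one and is what the paper does. But there are two concrete problems.

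First, a factual slip: you write that the curves of class $h-e_1$ cover a codimension-two locus in $Y_1$ ``but whose preimage is a divisor in $X$.'' That is false. The locus $V$ swept by lines through $p_1$ has dimension $n-2$ in $Y_1$ and still has dimension $n-2$ in $X$ (blowing up points, even infinitely near ones, does not raise the dimension of $V$). Indeed, $h-e_1$ being the \emph{only} extremal ray whose curves fail to cover a divisor is exactly why the flop exists and why $\Mov(X)\neq\Nef(X)$.

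Second, and this is the real gap: the divisorial extremal rays of $\NE(X)$ alone are not enough to cut out $\Mov(X)$. For $X_3$ they are $e_1-e_2$, $e_2-e_3$, $e_3$; imposing non-negativity on these three classes leaves the $H$-direction completely unconstrained, so the resulting cone is far larger than $\Nef(X)\cup\varphi^*\Nef(X)$. What you are missing is the constraint that bounds the \emph{far} side of the flopped chamber. The paper supplies it by producing, from components of reducible fibers of $\pi$, new curve classes that are not extremal in $\NE(X)$ but nonetheless sweep out prime vertical divisors: for $X_3$ the class $3h-2e_1-e_2$ (a component of the fiber over a point of $\pi(E_1-E_2)$), and for $X_{S2}$ the classes $2h-e_1-e_2$ and $3h-2e_1-e_3$ (components of fibers cut by planes tangent at $p_3$ and $p_1$ respectively). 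One then checks directly that the dual of the cone generated by these together with the obvious divisorial classes and $e_1-h$ is exactly $\varphi^*\Nef(X)$. Your outline never identifies these extra classes, and without them the argument for the inclusion $\Mov(X)\subseteq\Nef(X)\cup\varphi^*\Nef(X)$ does not close.
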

\begin{proof}
Observe that the curves numerically
equivalent to one of the generators 
of the Mori cone of $X$ span either
a divisor or a variety of codimension two.
For both types $X_3$ and $X_{S2}$ the
only class which spans a variety of
codimension two is $h-e_1$.
Let $X$ be of type $X_3$ and 
consider the following cone of $A^1(X)\otimes\qq$
\begin{equation}\label{cone-1}
 \Cone(e_2-e_3,e_3,3h-2e_1-e_2,e_1-h).
\end{equation}
We claim that if $D$
is a movable non-nef class of $X$, then 
it belongs to the dual of this cone.
First of all since $D$ is not nef then it
has negative intersection with $h-e_1$.
The curves numerically equivalent to one of the
first two classes span divisors of $X$.
The same holds for the curves equivalent 
to $3h-2e_1-e_2$. Indeed consider the divisor 
linearly equivalent to $\pi^*\Osh_{\pp^{n-1}}(1)$
\[
 \pi^*(\pi_*(E_1-E_2))
 =
 (E_1-E_2) + (E_2-E_3) + V,
\]
where $V$ is the strict transform of the 
hyperplane section $T_{p_1}Y\cap Y$.
The fiber over a point of $\pi(E_1-E_2)$
has a component in $V$ whose class is
$3h-2e_1-e_2$ since
$3h-e_1-e_2-e_3=(e_1-e_2)+(e_2-e_3)+(3h-2e_1-e_2)$.
This proves the claim. 
To conclude we observe that the dual of the 
cone of~\eqref{cone-1} is $\varphi^*\Nef(X)$
and thus it is generated by movable classes.

Let $X$ be of type $X_{S2}$ and 
consider the following cone of $A^1(X)\otimes\qq$
\begin{equation}
\label{cone-2}
 \Cone(e_2,e_3,2h-e_1-e_2,3h-2e_1-e_3,e_1-h).
\end{equation}
As before we claim that if $D$
is a movable non-nef class of $X$, then 
it belongs to the dual of this cone. 
First of all since $D$ is not nef then it
has negative intersection with $h-e_1$.
The curves numerically equivalent to one of the
first two classes span divisors of $X$.
Concerning the third class, observe that the 
class of a fiber of $\pi$ is $3h-e_1-e_2-e_3$
and its push-forward in $Y$ is the class of the plane
cubic obtained intersecting $Y$ with a plane $\Pi$ 
containing the line $L$.
If we take the plane $\Pi$ to be tangent to $Y$ at the star point $p_3$,
then the cubic splits as the union of the line
and the conic corresponding to $h-e_3$ 
and $2h-e_1-e_2$ respectively.
When $\Pi$ moves, the curves equivalent to $2h-e_1-e_2$
span a prime vertical divisor.
If we now take a plane $\Pi$ tangent to $Y$ at $p_1$, 
the fiber decomposes as the sum of 
a curve in $e_1-e_2$ and one in $3h-2e_1-e_3$. 
As before, if we let $\Pi$ move, the curves
equivalent to $3h-2e_1-e_3$ span a prime 
vertical divisor. 
Since $D$ is movable then it must have 
non-negative intersection with the first four 
classes. 
This proves the claim.
To conclude we observe that the dual of the 
cone of~\eqref{cone-2} is $\varphi^*\Nef(X)$
and thus it is generated by movable classes.
\end{proof}

\subsection{Finitely generated Cox rings}
Recall that a $\qq$-factorial projective
variety is {\em Mori dream} if its Cox
ring is finitely generated~\cite{HuKe}.
We conclude the section by showing 
which cubic elliptic varieties appearing in 
Table~\ref{types} are Mori dream.

\begin{lemma}\label{cmw}
Let $\pi: X\to Z$ be a jacobian elliptic fibration 
between Mori dream spaces.
If the Cox ring of $X$ is finitely generated
then the Mordell-Weil group of $\pi$
is finite.
\end{lemma}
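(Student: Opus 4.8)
The plan is to exploit the exact sequence \eqref{mw} together with the fact that a Mori dream space has finitely generated Picard group of finite rank and only finitely many small $\qq$-factorial modifications. First I would recall that since $X$ is a Mori dream space, $\Pic(X)$ is a finitely generated free abelian group, so by \eqref{mw} the Mordell-Weil group $X_\eta(K)\cong \Pic(X)/\mathscr{T}$ is a finitely generated abelian group; hence it suffices to prove that its rank is zero, i.e.\ that $\mathscr{T}$ has finite index in $\Pic(X)$, equivalently that $\mathscr{T}\otimes\qq = \Pic(X)\otimes\qq$. So the real content is to show that the classes of vertical divisors together with the class of the chosen section span $\Pic(X)_\qq$.

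The key step is the following dichotomy. Suppose $\mathscr{T}\otimes\qq \subsetneq \Pic(X)\otimes\qq$, so the Mordell-Weil rank is positive; I want to derive a contradiction with $X$ being Mori dream. A positive-rank Mordell-Weil group produces infinitely many sections $\sigma_k: Z\to X$, and after translating, their images $\Sigma_k=\sigma_k(Z)$ are pairwise distinct prime divisors on $X$ whose classes in $\Pic(X)_\qq$ are pairwise distinct (two sections that are linearly equivalent as divisors would be algebraically equivalent over $K$, hence equal in the Mordell-Weil group). Next I would argue that each $\Sigma_k$ spans an extremal ray of the effective (pseudo-effective) cone $\Eff(X)$: since $\Sigma_k$ is a section, it meets a general fiber $f$ transversally in one point, so $\Sigma_k$ is not vertical, and one checks that $\Sigma_k\cdot$ has negative self-intersection in the relevant sense along the fibration — more precisely, $\Sigma_k$ restricted to the elliptic surface $S=\pi^{-1}(R)$ over a general line $R$ is a section with negative square (as in the argument of the first Proposition of Section~1, where $\Gamma^2<0$ for a section of a rational elliptic surface), and a prime divisor that is negative on a curve moving in a family covering it must be extremal in $\Eff(X)$. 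Since the $\Sigma_k$ have distinct classes, $\Eff(X)$ then has infinitely many extremal rays, contradicting the fact that the Cox ring of $X$ is finitely generated (which forces $\Eff(X)$ to be a rational polyhedral cone, cf.\ \cite{HuKe}).

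The main obstacle I anticipate is the step showing that distinct sections give distinct extremal rays of $\Eff(X)$: one must rule out the possibility that infinitely many sections accumulate onto finitely many rays (i.e.\ that their classes, while distinct, become proportional or lie on finitely many faces). Here the hypothesis that $Z$ itself is a Mori dream space should be used: after pushing down to $S$ over a general line, the elliptic surface $S$ is Mori dream (being dominated by the restriction of the Cox ring structure), and a relatively minimal rational elliptic surface with infinite Mordell-Weil group is \emph{not} Mori dream — its effective cone has infinitely many $(-1)$-curves, namely the sections. So the cleanest route is: restrict everything to $S=\pi^{-1}(R)$, observe $S$ inherits finite generation of its Cox ring from that of $X$ together with the Mori dream property of $Z$ (so that the general line is cut out nicely), invoke that a relatively minimal rational elliptic surface is Mori dream if and only if its Mordell-Weil group is finite, and conclude that $\mw(\pi|_S)$ is finite; then lift back to $X$ using that restriction to a general fiber surface is injective on the Mordell-Weil group. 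I would isolate the needed statement about rational elliptic surfaces as the technical core, citing it if available in the literature and otherwise proving it via the $(-1)$-curve argument sketched above.
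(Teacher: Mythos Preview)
Your approach has genuine gaps, and more importantly it misses a much simpler argument. The paper's proof is three lines: for any section $E=\sigma(Z)$, the space $H^0(X,E)$ is one-dimensional (because $E$ restricts to a single point on the generic fiber $X_\eta$, so it cannot move in a linear series), and since $E$ is irreducible, a defining section for $E$ must appear among any set of generators of $\R(X)$. Distinct Mordell--Weil elements give distinct Picard classes (again because $h^0=1$), so an infinite Mordell--Weil group forces infinitely many generators. No extremality, no restriction to surfaces, no use of the hypothesis that $Z$ is Mori dream.

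Your route, by contrast, tries to produce infinitely many extremal rays of $\Eff(X)$, and the proposed mechanism does not work in the stated generality. First, the lemma allows $Z$ to be an arbitrary Mori dream space, so there is no ``general line $R$'' to restrict to; you are silently importing the ambient $\pp^{n-1}$ from the rest of the paper. Second, even when such a curve exists, the claim that $S=\pi^{-1}(R)$ ``inherits finite generation of its Cox ring'' from $X$ is unjustified: the Mori dream property does not descend to subvarieties in general. Third, the rationality of $S$ (needed for the $(-1)$-curve argument) was proved in the earlier Proposition only under the hypothesis $K_X\sim(1-n)\pi^*\Osh(1)$, which is not assumed here. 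Finally, injectivity of restriction on Mordell--Weil groups also needs an argument. Each of these could perhaps be patched under extra hypotheses, but the direct argument via $h^0(X,E)=1$ makes all of this unnecessary.
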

\begin{proof}
Let $X_\eta$ be the generic fiber of $\pi$,
let $\sigma: Z\to X$ be a section of $\pi$
and let $E = \sigma(Z)$. The Riemann-Roch
space $H^0(X,E)$ is one dimensional 
since
$E$ is effective and it can not move in
a linear series because it corresponds
to a point on the elliptic curve $X_\eta$.
Since $E$ is irreducible, any set of generators of the 
Cox ring of $X$ must contain a basis
of $H^0(X,E)$. Thus the Mordell-Weil
group of $\pi$ must be finite.
\end{proof}

\begin{theorem}\label{moridream}
Let $X$ be one of the cubic elliptic varieties
of Table~\ref{types}. Then the following
are equivalent:
\begin{enumerate}
\item the Cox ring of $X$ is finitely generated;
\item the Mordell-Weil group of $\pi: X\to\pp^{n-1}$ is finite.
\end{enumerate}
\end{theorem}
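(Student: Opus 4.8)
The plan is to handle the two implications separately, treating Sections~2 and~3 as a toolbox. The implication $(1)\Rightarrow(2)$ requires nothing new: $\pp^{n-1}$ is toric, hence a Mori dream space, so Lemma~\ref{cmw} applies directly and shows that finite generation of the Cox ring of $X$ forces $\mw(\pi)$ to be finite. Read contrapositively, this already settles the three types $X_{12}$, $X_{S11}$, $X_{111}$: their Mordell-Weil groups are infinite by Table~\ref{types}, so for them neither (1) nor (2) holds and the equivalence is vacuously true.

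It remains to prove $(2)\Rightarrow(1)$, and by the previous remark the only types left to consider are the four with finite Mordell-Weil group, namely $X_3$, $X_S$, $X_{S2}$ and $X_{SSS}$; for each of these I would show that $X$ is a Mori dream space. The tool is the Hu--Keel criterion~\cite{HuKe}: a $\qq$-factorial projective variety $X$ with $\Pic(X)_\qq=N^1(X)_\qq$ is Mori dream as soon as $\Nef(X)$ is rational polyhedral and generated by semiample classes and, moreover, $\overline{\Mov}(X)$ is a finite union of cones $f_i^*\Nef(X_i)$, where $f_i\colon X\rmap X_i$ runs over the small $\qq$-factorial modifications of $X$ and each $X_i$ again has rational polyhedral nef cone generated by semiample classes. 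The two standing hypotheses are easily checked in our situation: $X$ is smooth, hence $\qq$-factorial, and it is rationally connected because it is birational to a smooth cubic hypersurface of dimension at least three, whence $h^i(X,\Osh_X)=0$ for $i>0$; together with $\Pic(X)=\langle H,E_1,E_2,E_3\rangle$ this gives $\Pic(X)_\qq=N^1(X)_\qq$.

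Next I would feed in the cones already computed. Proposition~\ref{nefX} exhibits explicit semiample generators of $\Nef(X)$ for all four types, so the first half of the criterion is immediate. For $X_S$ and $X_{SSS}$, Proposition~\ref{SSS3S} gives $\Mov(X)=\Nef(X)$; thus $X$ admits no nontrivial small $\qq$-factorial modification and $\overline{\Mov}(X)$ consists of the single chamber $\Nef(X)$, so the criterion applies. For $X_3$ and $X_{S2}$, Proposition~\ref{3} gives $\Mov(X)=\Nef(X)\cup\varphi^*\Nef(X)$ with $\varphi$ the flop of Proposition~\ref{floppic}; since $\varphi\colon X\rmap X$ is an isomorphism in codimension one it is a small $\qq$-factorial modification onto a target isomorphic to $X$, so $\overline{\Mov}(X)$ is the union of the nef cones of the two small $\qq$-factorial modifications $\id_X$ and $\varphi$, each generated by semiample classes. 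In either case the criterion yields that $X$ is Mori dream, completing $(2)\Rightarrow(1)$.

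All the genuine geometry has already been carried out in Propositions~\ref{nefX}, \ref{SSS3S}, \ref{3} and~\ref{floppic}; the only point that needs care here is the bookkeeping in the Hu--Keel criterion — verifying that the chambers produced in Propositions~\ref{SSS3S} and~\ref{3} are indeed the nef cones of small $\qq$-factorial modifications of $X$, that there are finitely many such modifications (obvious in these cases, there being at most two), and that the standing hypotheses $\qq$-factoriality and $\Pic_\qq=N^1_\qq$ hold. I do not anticipate any serious obstacle beyond this routine verification.
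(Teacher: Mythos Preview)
Your proposal is correct and follows essentially the same route as the paper: both directions are handled exactly as you describe, with $(1)\Rightarrow(2)$ coming from Lemma~\ref{cmw} and $(2)\Rightarrow(1)$ reduced via Table~\ref{types} to the four extremal types, then verified through the Hu--Keel criterion using Propositions~\ref{nefX}, \ref{SSS3S}, \ref{3} and~\ref{floppic}. The only differences are cosmetic---you spell out the check that $\Pic(X)_\qq=N^1(X)_\qq$ and the contrapositive for the infinite Mordell--Weil types, while the paper leaves these implicit.
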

\begin{proof}
$(1)\Rightarrow (2)$. Follows from Lemma~\ref{cmw}.

$(2)\Rightarrow (1)$.
Since the Mordell-Weil group of $\pi$ is finite, 
looking at Table~\ref{types} we have that
$X$ must be either $X_3$, $X_{S}$, $X_{S2}$
or $X_{SSS}$.
In each of these cases we are going 
to use~\cite{HuKe}, showing
that the moving cone $\Mov(X)$
is union of finitely many polyhedral
chambers, each of which is pull-back
via a small $\qq$-factorial modification
$\phi: X\to X_i$ of $\Nef(X_i)$,
the last being generated by a
finite number of semiample
classes.

In the cases $X_{SSS}$ and $X_{S}$ 
the moving cone $\Mov(X)=\Nef(X)$ 
by Proposition~\ref{SSS3S}.
In cases $X_3$ and $X_{S2}$, by Proposition~\ref{3} 
the moving cone $\Mov(X)$ is the 
union of the two polyhedral chambers 
$\Nef(X)$ and $\varphi^*\Nef(X)$,
where $\varphi: X\to X$ 
is the small $\qq$-factorial modification
defined in Proposition~\ref{floppic}.
In all the cases we
conclude by Proposition~\ref{nefX}.
\end{proof}

\begin{remark}
Theorem~\ref{moridream} is the converse
of Lemma~\ref{cmw} for the cubic elliptic varieties
of Table~\ref{types}. The converse
of the lemma is not true in general:
given a jacobian elliptic fibration
$X\to Z$, with finite Mordell-Weil
group and $Z$ Mori dream, the
variety $X$ is not necessarily Mori dream.

For example consider the lattice 
$\Lambda= U\oplus 3A_1\oplus A_2$.
Since $\Lambda$ is an even hyperbolic 
lattice of rank $7\leq 10$, then it embeds into
the K3 lattice by~\cite{Ni}.
Thus by the global Torelli theorem there
exists a K3 surface $X$ whose Picard
lattice is isometric to $\Lambda$.
We observe that the surface $X$ admits
a jacobian elliptic fibration with finite
(indeed trivial) Mordell-Weil group
by~\cite[Table 1, n.19]{Sh}.
Moreover the automorphism group
of $X$ is infinite since the lattice 
$\Lambda$ is not $2$-elementary
and does not appear in the list
of~\cite[Theorem 2.2.2]{Do}. Hence
$X$ is not Mori dream by~
\cite[Theorem 2.7, Theorem 2.11]{AHL}.

We could not find an example of a 
variety $X$ which admits a unique
jacobian elliptic fibration $X\to Z$
with finite Mordell-Weil group,
$Z$ Mori dream and such that $X$ is 
not Mori dream as well.

\end{remark}

\newpage

\section{Cox rings}

In this section we provide a presentation
for the Cox rings of the cubic elliptic varieties
of type $X_{3}$, $X_{S}$, $X_{S2}$
and $X_{SSS}$.
Without loss of generality we can assume that
the defining polynomial of a smooth cubic hypersurface
is one of the polynomials listed in Table~\ref{types}.
Here we require an extra
assumption in cases $X_3$ and $X_S$:
the coefficient of $T_n^3$ is non-zero.
This condition will be needed in the 
proof of Lemma~\ref{vample}.

\begin{center}
\footnotesize
\begin{longtable}{lll}
\hline
Type 
&
Equations for $Y\subset\pp^{n+1}$ and the line $L$
&
Conditions on the polynomials
\\
\hline
\\
$X_{3}$
&
$
\begin{array}{l}
\scriptsize
T_{n+1}(a'+T_{n+2}\,a_1)+T_{n+2}\,b'+b_1=0
\\
\\
\scriptsize
T_1=\dots=T_{n-1}=T_{n+1}=0
\end{array}
$
&
$
\begin{array}{l}
\scriptsize
b', b_1\in\cc[T_1,\dots,T_n]
\\
\scriptsize
a'\in\cc[T_1,\dots,T_{n+1}]
\\
\scriptsize
a_1\in\cc[T_1,\dots,T_{n+2}]
\\
\scriptsize
\text{The coefficient of $T_n^2$
in $b'$ is zero}
\\
\scriptsize
\text{The coefficient of $T_n^3$
in $b_1$ is non-zero}
\end{array}
$
\\
\\
\hline
\\
$X_{S}$
&
$
\begin{array}{l}
\scriptsize
T_{n+1}\,a_2+b_2=0
\\
\\
\scriptsize
T_1=\dots=T_{n-1}=T_{n+1}=0
\end{array}
$
&
$
\begin{array}{l}
\scriptsize
b_2\in\cc[T_1,\dots,T_n]
\\
\scriptsize
a_2\in\cc[T_1,\dots,T_{n+2}]
\\
\scriptsize
\text{The coefficient of $T_n^3$
in $b_2$ is non-zero}
\end{array}
$
\\
\\
\hline
\\
$X_{S2}$
&
$
\begin{array}{l}
\scriptsize
T_{n+1}\,a_3+b_3=0
\\
\\
\scriptsize
T_1=\dots=T_{n}=0
\end{array}
$
&
$
\begin{array}{l}
\scriptsize
b_3\in\cc[T_1,\dots,T_n]
\\
\scriptsize
a_3\in\cc[T_1,\dots,T_{n+2}]
\\
\scriptsize
\text{The coefficient of $T_{n+1}^2$ in $a_3$ is zero}
\\
\scriptsize
\text{The coefficient of $T_{n+1}T_{n+2}$ in $a_3$ 
is zero}
\end{array}
$
\\
\\
\hline
\\
$X_{SSS}$
&
$
\begin{array}{l}
\scriptsize
T_{n+1}\,T_{n+2}\,a_4+b_4=0
\\
\scriptsize
T_1=\dots=T_{n}=0
\end{array}
$
&
$
\begin{array}{l}
\scriptsize
b_4\in\cc[T_1,\dots,T_n]
\\
\scriptsize
a_4\in\cc[T_1,\dots,T_{n+2}]
\end{array}
$
\\
\\
\hline
\\
\caption{Equations of extremal cubic elliptic varieties}
\label{cox-types}
\end{longtable}
\end{center}

We now define four homomorphisms
of rings which will be used in
Theorem~\ref{teor}.

\begin{center}
\footnotesize
\begin{longtable}{ll}
\hline
Homomorphism & Defined by
\\
\hline
\\
\begin{tabular}{l}
$
\beta_1:
\cc[T_1,\dots,T_{n+3}]
\to
\cc[T_1,\dots,T_{n+3},S_1,S_2,S_3]
$\\
\end{tabular}
&
$
 \begin{array}{ll}
 T_k & \mapsto T_k\,S_1\,S_2^2\,S_3^3\\[2pt]
 T_n & \mapsto T_n\,S_1\,S_2\,S_3\\[2pt]
 T_{n+1} & \mapsto T_{n+1}\,S_1^2\,S_2^3\,S_3^3\\[2pt]
 T_{n+2} & \mapsto T_{n+2}\\[2pt]
 T_{n+3} & \mapsto T_{n+3}\,S_1^3\,S_2^3\,S_3^3
\end{array}
$
\\
\\
\hline
\\
\begin{tabular}{l}
$
\beta_2:
\cc[T_1,\dots,T_{n+2}]
\to
\cc[T_1,\dots,T_{n+2},S_1,S_2,S_3]
$\\
\end{tabular}
&
$
 \begin{array}{ll}
 T_k & \mapsto T_k\,S_1\,S_2^2\,S_3^3\\[2pt]
 T_n & \mapsto T_n\,S_1\,S_2\,S_3\\[2pt]
 T_{n+1} & \mapsto T_{n+1}\,S_1^3\,S_2^3\,S_3^3\\[2pt]
 T_{n+2} & \mapsto T_{n+2}
\end{array}
$
\\
\\
\hline
\\
\begin{tabular}{l}
$
\beta_3:
\cc[T_1,\dots,T_{n+3}]
\to
\cc[T_1,\dots,T_{n+3},S_1,S_2,S_3]
$\\
\end{tabular}
&
$
 \begin{array}{ll}
 T_k & \mapsto T_k\,S_1\,S_2^2\,S_3^3\\[2pt]
 T_n & \mapsto T_n\,S_1^2\,S_2^2\,S_3\\[2pt]
 T_{n+1} & \mapsto T_{n+1}\,S_3^3\\[2pt]
 T_{n+2} & \mapsto T_{n+2}\,S_1\,S_2\\[2pt]
 T_{n+3} & \mapsto T_{n+3}\,S_1^3\,S_2^6
\end{array}
$
\\
\\
\hline
\\
\begin{tabular}{l}
$
\beta_4:
\cc[T_1,\dots,T_{n+3}]
\to
\cc[T_1,\dots,T_{n+3},S_1,S_2,S_3]
$\\
\end{tabular}
&
$
 \begin{array}{ll}
 T_k & \mapsto T_k\,S_1\,S_2^2\,S_3^3\\[2pt]
 T_n & \mapsto T_n\,S_1\,S_2^2\,S_3^3\\[2pt]
 T_{n+1} & \mapsto T_{n+1}\,S_1^3\\[2pt]
 T_{n+2} & \mapsto T_{n+2}\,S_2^3\\[2pt]
 T_{n+3} & \mapsto T_{n+3}\,S_3^3
\end{array}
$
\\
\\
\hline
\\
\end{longtable}
\end{center}

The following theorem is the 
main result of this section.
We postpone its proof until the
end of the section.

\begin{theorem}\label{teor}
Let $Y$ and $L$ be a smooth cubic hypersurface
and a line of $\pp^{n+1}$
whose defining equations
are given in Table~\ref{cox-types}.
Let $X$ be the corresponding cubic 
elliptic variety of type $X_3$, $X_S$, 
$X_{S2}$, $X_{SSS}$. Then the Cox ring 
of $X$ is one of the following.
\begin{enumerate}
\item Type $X_3$: the Cox ring is
$\cc[T_1,\ldots, T_{n+3},S_1,S_2,S_3]/\mathfrak{I}_1$,
where $\mathfrak{I}_1$ is generated by
{\small
\[
  \frac{\beta_1(T_{n+3}-T_{n+1}\,a_1-b')}{S_1^2S_2^3S_3^3}
  \qquad
 \frac{\beta_1(T_{n+2}\,T_{n+3}+T_{n+1}\,a'+b_1)}{S_1^3S_2^3S_3^3}
\]
}
with the $\zz^4$-grading given by the 
grading matrix
{\scriptsize
\[
 \left[
 \begin{array}{rrrrrrrrrr}
   1&\cdots& 1 & 1 & 1  & 1 & 2& 0 & 0 & 0\\
  -1&\cdots&-1 &   -1 & -2 & 0 &-3& 1& 0 & 0\\
  -1&\cdots&-1 &  0 & -1 & 0 & 0 & -1&1 & 0\\
  -1&\cdots&-1 &  0 & 0  & 0 & 0 & 0 &-1& 1
 \end{array}
 \right]
\]
}
\item
Type $X_S$: the Cox ring is
$\cc[T_1,\ldots, T_{n+2},S_1,S_2,S_3]/\mathfrak{I}_2$,
where $\mathfrak{I}_2$ is generated by
{\small
\[
  \frac{\beta_2(T_{n+1}\,a_2+b_2)}{S_1^3S_2^3S_3^3}
\]
}
with the $\zz^4$-grading given by the 
grading matrix
{\scriptsize
\[
 \left[
 \begin{array}{rrrrrrrrrr}
   1&\cdots& 1 &   1 & 1  & 1 & 0& 0 & 0\\
  -1&\cdots&-1 & -1 & -3 & 0 &1 & 0 & 0\\
  -1&\cdots&-1 &  0 & 0  & 0 & -1& 1& 0\\
  -1&\cdots&-1 &  0 & 0  & 0 & 0 &-1& 1
 \end{array}
 \right]
\]
}
\item
Type $X_{S2}$: the Cox ring is
$\cc[T_1,\ldots, T_{n+3},S_1,S_2,S_3]/\mathfrak{I}_3$,
where $\mathfrak{I}_3$ is generated by
{\small
\[
 \frac{\beta_3(T_{n+3}-a_3)}{S_1^2S_2^2}
 \qquad
 \frac{\beta_3(T_{n+1}\,T_{n+3}+b_3)}{S_1^3S_2^6S_3^3}
\]
}
with the $\zz^4$-grading given by the 
grading matrix
{\scriptsize
\[
 \left[
 \begin{array}{rrrrrrrrrr}
   1&\cdots& 1 &   1 & 1  &  1 & 2&  0  & 0 & 0\\
  -1&\cdots&-1 & -2 & 0  & -1 &-3&  1 &  0 & 0\\
  -1&\cdots&-1 &  0 & 0  &  0 & -3& -1 & 1 & 0\\
  -1&\cdots&-1 &  -1 & -3  & 0 & 0 &  0 & 0 & 1
 \end{array}
 \right]
\]
}
\item
Type $X_{SSS}$: the Cox ring is
$\cc[T_1,\ldots, T_{n+3},S_1,S_2,S_3]/\mathfrak{I}_4$,
where $\mathfrak{I}_4$ is generated by
{\small
\[
 \beta_4(T_{n+3}-a_4)
 \qquad
 \frac{\beta_4(T_{n+1}\,T_{n+2}\,T_{n+3}+b_4)}{S_1^3S_2^3S_3^3}
\]
}
with the $\zz^4$-grading given by the 
grading matrix
{\scriptsize
\[
 \left[
 \begin{array}{rrrrrrrrrr}
   1&\cdots& 1 &  1 &  1  & 1  & 0&  0 & 0\\
  -1&\cdots&-1 & -3 & 0  & 0  & 1&  0 & 0\\
  -1&\cdots&-1 &  0 & -3 & 0  & 0 & 1 & 0\\
  -1&\cdots&-1 &  0 &  0 & -3 & 0 & 0 & 1
 \end{array}
 \right]
\]
}
\end{enumerate}
\end{theorem}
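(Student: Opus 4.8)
\textbf{Proof proposal for Theorem~\ref{teor}.}

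The plan is to compute each Cox ring via the general strategy for Mori dream spaces that are small $\qq$-factorial modifications of toric varieties, following the approach used for blow-ups of projective space. Since $X$ is obtained from $Y\subset\pp^{n+1}$ by the three blow-ups $\sigma_1,\sigma_2,\sigma_3$ of Remark~\ref{bu}, and $Y$ is a cubic hypersurface, I would first realize $Y$ itself through its Cox ring: writing $Y=V(f)\subset\pp^{n+1}$ with $f$ the cubic of Table~\ref{cox-types}, the Cox ring of $Y$ is $\cc[T_1,\dots,T_{n+2}]/(f)$, graded by $\zz$. The effect of each blow-up $\sigma_i$ on the Cox ring is, at the level of the total coordinate ring, a saturated extension by one new variable $S_i$ in the new Picard direction; concretely one introduces $S_i$ together with the relation expressing the strict transform of the relevant generator. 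The homomorphisms $\beta_j$ are precisely the substitutions that track how the original homogeneous coordinates $T_k$ pick up factors of the $S_i$ under the iterated blow-ups (the exponents of $S_1,S_2,S_3$ encoding the multiplicities of vanishing of the $T_k$ along $E_1,E_2,E_3$), and the proposed relations $\beta_j(\,\cdot\,)/S_1^{a}S_2^{b}S_3^{c}$ are the strict transforms of the two pieces into which the defining cubic splits once the tangency conditions of Table~\ref{cox-types} are imposed.

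I would organize the argument around the criterion that a $\qq$-factorial projective variety $X$ with $\Pic(X)$ of rank $r$, which is a small $\qq$-factorial modification of a toric variety, has the displayed ring as Cox ring provided: (i) the listed generators are $\Pic(X)$-homogeneous and actually lie in $\R(X)=\bigoplus_D H^0(X,D)$; (ii) the quotient ring has the right dimension, namely $\dim X + r = (n) + 4$; (iii) the ring is integral and normal; and (iv) the candidate generators generate, which one checks by verifying that the corresponding rational map to the toric variety defined by the grading matrix is a small modification, i.e. an isomorphism in codimension one, using that $\Mov(X)$ is covered by $\Nef(X)$ and $\varphi^*\Nef(X)$ (Propositions~\ref{SSS3S} and~\ref{3}) and that all the chambers are GIT chambers of the ambient toric variety. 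Step (i) is where the extra hypothesis in Table~\ref{cox-types} enters — the conditions that certain coefficients vanish (for $X_{S2}$, $X_{SSS}$) guarantee that the cubic genuinely factors through the center of the blow-up with the stated multiplicities, while the condition that the coefficient of $T_n^3$ be non-zero in cases $X_3$, $X_S$ is exactly what makes a suitable divisor very ample (Lemma~\ref{vample}, invoked in the paper), ensuring that the listed sections cut out $X$ cleanly rather than a larger scheme.

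Concretely, for each of the four types I would: (a) write down the toric ambient variety $Z_j$ whose Cox ring is the polynomial ring $\cc[T_\bullet,S_1,S_2,S_3]$ with the stated grading matrix, and identify its relevant GIT chamber with $\Nef(X)$; (b) exhibit $X$ inside $Z_j$ as the closure of the graph of the natural rational map, realized by the relations generating $\mathfrak{I}_j$, checking by a local/Jacobian computation that this subvariety is irreducible, normal, of the expected dimension, and meets the unstable locus in codimension $\ge 2$; (c) conclude that $\R(X)=\cc[T_\bullet,S_\bullet]/\mathfrak{I}_j$ by invoking the comparison theorem for Cox rings under small modifications together with the fact that the obvious surjection $\cc[T_\bullet,S_\bullet]/\mathfrak{I}_j\to\R(X)$ is an isomorphism in each degree, which reduces — by the toric description of $\Mov(X)$ — to checking it on the two maximal chambers, where it amounts to the classical description of the Cox ring of an iterated blow-up of $Y$ along smooth (possibly infinitely near) centers.

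The main obstacle I anticipate is step (b)/(c): verifying that the two explicit generators of $\mathfrak{I}_j$ \emph{suffice}, i.e. that there are no further relations. This is the point where one must show that the subvariety of $Z_j$ they define is already irreducible and normal of the right dimension (so that, being contained in the irreducible $X$ of the same dimension, it equals $X$) — a computation that in the cubic case is delicate because the relations have high $S$-degree and the ideal is not a complete intersection in the naive sense after the $S_i$-divisions. I would handle this by exhibiting an affine chart (setting one $S_i=1$) in which the relations become the familiar equations for a blow-up of a cubic and the normality/irreducibility are transparent, then propagating via the torus action; the tangency hypotheses of Table~\ref{cox-types} are what make these charts well-behaved. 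The finite Mordell-Weil hypothesis is used only implicitly, via Theorem~\ref{moridream}, to know a priori that $\R(X)$ is finitely generated so that this "find the generators, then the relations" strategy is guaranteed to terminate.
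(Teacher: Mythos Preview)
Your overall framework---realize the candidate ring $R_n=\cc[T,S]/\mathfrak{I}_j$ as a subalgebra of $\R(X)$ via a toric ambient and then argue that the inclusion is an equality using the GIT/chamber structure---is the same skeleton the paper uses. You also correctly identify the role of the tangency conditions in Table~\ref{cox-types} and the purpose of Lemma~\ref{vample}. However, the step you flag as ``the main obstacle'' (your (b)/(c)) is genuinely the heart of the proof, and the mechanism you propose for it does not quite work. First, there is no ``obvious surjection'' $\cc[T,S]/\mathfrak{I}_j\to\R(X)$: what one has is the \emph{injection} $R_n\hookrightarrow\R(X)$ (this is Lemma~\ref{subring}), and the whole difficulty is surjectivity. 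Second, your reduction to ``the classical description of the Cox ring of an iterated blow-up of $Y$'' is circular---there is no such classical description, and computing it is precisely the content of the theorem. Checking in affine charts that $V(\mathfrak{I}_j)$ is irreducible and normal of the right dimension is essentially what Lemma~\ref{subring} does (using~\cite{BHK}), but this only gives $R_n\subseteq\R(X)$; it does not rule out further generators of $\R(X)$ living in degrees you have not examined.

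The paper resolves this by a completely different device that you do not mention: \emph{induction on $n$}. The key input is Proposition~\ref{criterion}, which says that to conclude $R_n=\R(X)$ it suffices to check equality in a \emph{single} degree $w=[W]$, where $W$ is very ample and the GIT chamber $\lambda(w)$ is full-dimensional (this is where Lemma~\ref{vample} is actually used). The equality $(R_n)_w=\R(X)_w$ is then proved inductively: the divisor $D=V(f_1)$ of $X$ is itself a cubic elliptic variety of the same type in dimension $n-1$ with $R_{n-1}\cong R_n/\langle T_1\rangle$ (Lemma~\ref{rinascente}), and the restriction sequence $0\to H^0(X,W-D)\to H^0(X,W)\to H^0(D,W|_D)\to 0$ is exact on the right by Kawamata--Viehweg vanishing. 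By induction $\R(D)=R_{n-1}$, giving a section of the restriction map with image in $R_n$; one then peels off $W-D$ through the successive blow-downs $\sigma_3,\sigma_2,\sigma_1$ by the same exact-sequence argument until reaching the hyperplane class on $Y$, whose sections visibly lie in $R_n$. The base case $n=3$ (Lemma~\ref{case3}) is a direct dimension comparison in degree $w$, done by reading off $\dim(R_3)_w$ from the Koszul resolution of $\mathfrak{I}_j$ and matching it with the Riemann--Roch number $h^0(X,W)$. None of these ingredients---the single-degree criterion, the induction via hyperplane sections, the vanishing theorem, or the base-case dimension count---appear in your proposal, and without them the argument does not close.
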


\vspace{1mm}

\subsection{Algebraic preliminaries}
\label{diag}
We follow the construction
given in Section 3 of~\cite{BHK}.
Let $\cc[T,S]$ be a polynomial
ring in 
the variables $T_1,\dots,T_r,
S_1,\dots,S_s$, graded by an abelian
group $K_T\oplus K_S$.
Let $\cc[T,S^{\pm 1}]$
be its localization with respect to 
all the $S$ variables and let
$\cc[T]$ be the polynomial ring
in the first $r$ variables graded
by $K_T$. Denote by
$\cc[T,S^{\pm 1}]_0$
the degree zero part of 
$\cc[T,S^{\pm 1}]$ with respect
to the $K_S$ grading.
Assume the following diagram of 
homomorphisms is given:
\begin{equation}\label{pre}
 \xymatrix{
  I\ar[r] & 
  \cc[T,S]\ar[rr]^-{\psi}
  \ar[d]_-j
  && R\\
   & 
  \cc[T,S^{\pm 1}]
  %\ar[rr]^-\phi
  &&\cc[T]\ar[u]^-{\rho}
  \ar[llu]_-{\beta}\ar[lld]_{\alpha}^-\cong\\
  & \cc[T,S^{\pm 1}]_0
  \ar[u]^-{j_0}
  && J\ar[u]
 }
\end{equation}
such that $R$ is a $K_T\oplus K_S$-graded
domain, $\psi$ is a graded surjective 
homomorphism with kernel $I$,
$\rho$ a graded 
homomorphism with kernel $J$,
both $j$ and $j_0$ are inclusions,
$\rho=\psi\circ\beta$ and
$\alpha(T_i) = T_i\cdot m_i(S)
=\beta(T_i)$, for any $i$, where
$m_i(S)\in\cc[T,S]$ is a monomial
in the variables $S$.

\begin{proposition}\label{ideals}
Under the above assumptions
let $J'\subset\cc[T,S]$ be the 
extension and contraction of the
ideal $\alpha(J)$. Then $J'\subset I$.
\end{proposition}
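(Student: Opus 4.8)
The plan is to unwind the definitions in diagram~\eqref{pre} and track an element of $J$ through the two routes that both land in $\cc[T,S]$. Concretely, let $f\in J$, so $f\in\cc[T]$ and $\rho(f)=0$ in $R$. I would first observe that $\alpha(J)\subseteq\cc[T,S^{\pm 1}]$ lies in the degree-zero part $\cc[T,S^{\pm 1}]_0$ with respect to the $K_S$-grading: this is because $\alpha$ factors as $j_0$ composed with the isomorphism $\alpha\colon\cc[T]\xrightarrow{\;\cong\;}\cc[T,S^{\pm1}]_0$, so every element of $\alpha(\cc[T])$, a fortiori every element of $\alpha(J)$, is $K_S$-homogeneous of degree zero.

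Next I would use the commutativity $\rho=\psi\circ\beta$ together with $\beta(T_i)=\alpha(T_i)=T_i\,m_i(S)$. Since $\beta$ and $\alpha$ agree on the generators $T_i$ and $\cc[T]$ is generated by them, $\beta$ and $\alpha$ agree as maps out of $\cc[T]$; in particular $\beta(f)$ and $\alpha(f)$ are "the same" element, the former viewed in $\cc[T,S]$ via the inclusion $j$ and the latter in $\cc[T,S^{\pm1}]$. Thus for $f\in J$ we get $\psi(\beta(f))=\rho(f)=0$, i.e. $\beta(f)\in I=\ker\psi$. Now $\beta(f)=\alpha(f)$ is already a genuine element of the polynomial ring $\cc[T,S]$ (no negative exponents appear, since each generator maps to $T_i$ times a monomial in the $S_j$), and it is $K_S$-homogeneous of degree zero. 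This shows that the naive image $\alpha(J)$, interpreted inside $\cc[T,S]$, already lies in $I$.

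Finally I would reconcile this with the precise statement, which speaks of $J'$, the extension and contraction of the ideal $\alpha(J)$ in $\cc[T,S]$. The subtlety is that a priori $\alpha(J)$ is an ideal of $\cc[T,S^{\pm1}]_0$ (or its image under $j_0\circ\alpha$ sits in the localization), and one must argue that passing to its extension in $\cc[T,S]$ and then contracting back keeps everything inside $I$. For the extension: each generator of $\alpha(J)$ lies in $I$ by the previous paragraph, so the extended ideal $\alpha(J)\cdot\cc[T,S]$ is contained in $I$. For the contraction: the contraction of an ideal contained in $I$ is again contained in $I$ (contraction is monotone and $I$ is already an ideal of $\cc[T,S]$, so $I^{ec}=I$ where relevant), hence $J'\subseteq I$. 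The only real care needed is bookkeeping about which ring each object lives in and the fact, guaranteed by the hypothesis that $R$ is a domain and $\psi$ is graded surjective with kernel $I$, that $I$ is a genuine $K_T\oplus K_S$-homogeneous ideal; I expect the main obstacle to be purely notational — making the identification $\beta(f)=\alpha(f)\in\cc[T,S]$ rigorous and checking that "extension then contraction" does not enlarge the ideal beyond $I$ — rather than anything requiring new ideas.
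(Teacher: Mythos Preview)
Your overall strategy matches the paper's: show $\beta(J)\subset I$ via $\rho=\psi\circ\beta$, then argue that the extension--contraction $J'$ stays inside $I$. The gap is in the last step. Here ``extension and contraction'' means extending $\alpha(J)$ to the localization $\cc[T,S^{\pm 1}]$ and then contracting back along $j\colon\cc[T,S]\hookrightarrow\cc[T,S^{\pm 1}]$; equivalently, $J'$ is the \emph{saturation} of $\beta(J)\cdot\cc[T,S]$ with respect to $S_1\cdots S_s$. Saturation can strictly enlarge an ideal, so showing $\alpha(J)\cdot\cc[T,S]\subset I$ (which you do correctly) does not yet give $J'\subset I$; your reading of the operation as ``extend to $\cc[T,S]$, then contract'' misses this.

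You do assert $I^{ec}=I$, which is exactly the fact needed, but the justification you offer --- that $I$ is a $K_T\oplus K_S$-homogeneous ideal --- is not the relevant property: homogeneity alone does not force an ideal to equal its saturation. The paper uses the hypothesis that $R$ is a domain at precisely this point: it makes $I$ prime, and a prime ideal of $\cc[T,S]$ not containing any $S_j$ equals its own saturation with respect to the $S$ variables, whence $J'\subset I^{ec}=I$. So the obstacle is not ``purely notational'' as you anticipate; the domain hypothesis on $R$ does real work here, and it is primeness rather than homogeneity that closes the argument.
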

\begin{proof}
Observe that $\beta(J)\subset I$ 
since $\rho=\psi\circ\beta$.
Moreover from
\[
 \beta(J)\cdot\cc[T,S^{\pm 1}]
 =
 \alpha(J)\cdot\cc[T,S^{\pm 1}],
\]
we get that $J'$ is contained
in the saturation of $I$ with
respect to the variables $S$.
Since $R$ is a domain, then $I$
is saturated,
hence we get the statement.
\end{proof}

The following statement identifies a Cox ring 
with certain subalgebras. 
Consider a factorially $K$-graded 
normal affine algebra $R = \oplus_K R_w$ 
with pairwise non-associated $K$-prime generators
$f_1, \ldots, f_r$ and set $w_i := \deg(f_i) \in K$.
The $K$-grading is {\em almost free} if any 
$r-1$ of the $w_i$ generate $K$ as a group.
The {\em moving cone\/} 
$\Mov(R) \subseteq K_{\mathbb{Q}}$ 
is the intersection over all cones
in $K_{\mathbb{Q}}$ generated 
by any $r-1$ of the degrees $w_i$.
Recall that $\Mov(R)$ comes with a subdivision 
into finitely many polyhedral {\em GIT-cones\/}
$\lambda(w)$ associated to the classes 
$w \in \Mov(R)$, see~\cite[Prop.~3.9]{Ha2}.

\begin{proposition}\label{criterion}
Let $X$ be a $\mathbb{Q}$-factorial projective variety 
with finitely generated Cox ring $\mathcal{R}(X)$
and $R \subseteq \mathcal{R}(X)$ a 
finitely generated normal almost freely 
factorially $\Cl(X)$-graded subalgebra
such that $R$ and $\mathcal{R}(X)$ have 
the same quotient field.
If there is a very ample divisor $D$ on $X$ 
such that $R_{[D]} = \mathcal{R}(X)_{[D]}$
holds and $\lambda([D]) \subseteq \Mov(R)$
is of full dimension, then we have $R = \mathcal{R}(X)$.
\end{proposition}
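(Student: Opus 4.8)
The plan is to exploit the standard correspondence between finitely generated Cox rings and certain ``ambient'' presentations, following the Cox-ring machinery recalled in Section~\ref{diag}. The statement says: if $R \subseteq \mathcal{R}(X)$ is a normal, finitely generated, almost freely factorially $\Cl(X)$-graded subalgebra with the same quotient field as $\mathcal{R}(X)$, and if there is a very ample $D$ with $R_{[D]} = \mathcal{R}(X)_{[D]}$ and $\lambda([D])$ of full dimension in $\Mov(R)$, then $R = \mathcal{R}(X)$. Both rings are integral domains with the same quotient field, so it suffices to prove the reverse inclusion $\mathcal{R}(X) \subseteq R$, i.e.\ that each homogeneous component $\mathcal{R}(X)_w$ is already contained in $R_w$.

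First I would reduce to a ``local'' comparison on the level of total coordinate spaces. Both $R$ and $\mathcal{R}(X)$ being finitely generated, normal, and almost freely factorially graded, they define $\Cl(X)$-graded affine varieties $\bar{X}' = \Spec R$ and $\bar{X} = \Spec \mathcal{R}(X)$, and the inclusion $R \hookrightarrow \mathcal{R}(X)$ gives a dominant, birational, torus-equivariant morphism $\bar{X} \to \bar{X}'$ which is an isomorphism away from a closed set; normality of $R$ plus Serre's criterion means it is enough to control codimension one. The key point is that the GIT-chamber $\lambda([D])$, being of full dimension inside $\Mov(R)$, singles out a maximal open subset $\hat{X}' \subseteq \bar{X}'$ of semistable points whose quotient is a $\mathbb{Q}$-factorial projective variety $X'$; since $R_{[D]} = \mathcal{R}(X)_{[D]}$ and $D$ is very ample on $X$, this quotient $X'$ is canonically identified with $X$ — the ring in degrees of multiples of $[D]$ computes the projective embedding, and agreement there forces $X' \cong X$ as the $\Proj$ of a common homogeneous coordinate ring.

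Next I would use the universal property of the Cox ring. Having identified $X' = X$ and noting that $R$ is factorially $\Cl(X)$-graded with the grading being almost free, the pair $(R, X)$ satisfies exactly the axioms that characterize the Cox ring up to the image in $\Cl$; by the uniqueness part of the Cox-ring construction (as in the cited references \cite{Ha2}, \cite{BHK}, \cite{HuKe}), the inclusion $R \subseteq \mathcal{R}(X)$ must then be an isomorphism, because a proper factorially graded subalgebra with the same quotient field and the same associated variety would contradict the normality and divisor-class bookkeeping: any section $s \in \mathcal{R}(X)_w \setminus R_w$ would produce a rational function on $\Spec R$ that is regular in codimension one yet not in $R$, contradicting normality of $R$. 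Concretely, write $s = p/q$ with $p, q \in R$ homogeneous; the divisor of $s$ on $X$ is effective modulo the boundary, which combined with $R_{[D]} = \mathcal{R}(X)_{[D]}$ (enough to detect effectivity via the very ample $D$) shows $\operatorname{div}_{\bar{X}'}(s) \geq 0$ in codimension one, so $s \in R$ by normality.

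The main obstacle I expect is the identification $X' \cong X$ from the single hypothesis $R_{[D]} = \mathcal{R}(X)_{[D]}$: one must check that agreement of the two graded rings along the ray $\mathbb{Z}_{\geq 0}[D]$ really does pin down the GIT quotient taken with respect to the full-dimensional chamber $\lambda([D])$, and not merely a birational modification of it. This is where full-dimensionality of $\lambda([D])$ is essential — it guarantees that $X'$ is $\mathbb{Q}$-factorial and projective (not just a Mori-theoretic contraction), so that $\Proj \bigoplus_{k \geq 0} R_{k[D]} = \Proj \bigoplus_{k \geq 0} \mathcal{R}(X)_{k[D]} = X$ with $D$ ample on both sides, forcing the chambers and hence the varieties to coincide. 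Once this is in place, the normality argument closing the gap is routine.
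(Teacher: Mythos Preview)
Your overall framework—passing to the $H$-equivariant morphism $\overline{X} = {\rm Spec}\,\mathcal{R}(X) \to \overline{Y} = {\rm Spec}\,R$ and closing the gap via normality—matches the paper's. The weakness is in how you justify the identification $X' \cong X$ and in what role you assign to full-dimensionality of $\lambda([D])$.

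You claim that $R_{[D]} = \mathcal{R}(X)_{[D]}$ forces $\operatorname{Proj}\bigoplus_k R_{k[D]} \cong \operatorname{Proj}\bigoplus_k \mathcal{R}(X)_{k[D]}$, but the hypothesis gives agreement only in the single degree $[D]$, not along the whole ray, and your proposed fix (``full-dimensionality makes $X'$ $\mathbb{Q}$-factorial and projective, so the two $\operatorname{Proj}$'s agree'') is a non sequitur: $\mathbb{Q}$-factoriality of $X'$ says nothing about equality of the graded pieces $R_{k[D]}$ and $\mathcal{R}(X)_{k[D]}$. One could rescue the identification by a different argument—the induced $X \to X'$ is a birational morphism of $\mathbb{Q}$-factorial projective varieties inducing an isomorphism on class groups, hence small, hence an isomorphism—but that is not what you wrote. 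Your subsequent normality step (``$\operatorname{div}_{\bar{X}'}(s) \geq 0$ in codimension one'') likewise presupposes the needed codimension-one control rather than establishing it.

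The paper sidesteps the identification entirely. It never shows $X \cong Y$ directly; it only uses that the induced map $X \to Y$ is \emph{surjective} (dominant plus projective). The genuine role of full-dimensionality of $\lambda([D])$ is then different from what you propose: by \cite[Lemma~3.10]{Ha2} it forces the fibers of the characteristic space $\widehat{Y} \to Y$ to be single $H$-orbits, and an equivariance argument lifts surjectivity of $X \to Y$ to surjectivity of $\widehat{X} \to \widehat{Y}$. Since $[D]$ lies in $\Mov(R)$, the complement $\overline{Y} \setminus \widehat{Y}$ has codimension at least two, and Richardson's Lemma (a birational morphism of normal affine varieties surjecting onto the complement of a codimension-two closed set is an isomorphism) finishes. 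This codimension-two input is precisely the concrete fact that feeds the Hartogs/normality argument, and it never appears in your proposal.
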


\begin{proof}
Consider the total coordinate space
$\overline{X} := {\rm Spec} \, \mathcal{R}(X)$
and $\overline{Y} := {\rm Spec} \, R$. 
Both come with an action of the characteristic 
quasitorus $H := {\rm Spec} \, \mathbb{C}[\Cl(X)]$ 
and we have a canonical $H$-equivariant morphism 
$\overline{X} \to \overline{Y}$.
Moreover, for $w := [D] \in \Cl(X)$, the inclusion
$R(w)  \subseteq \mathcal{R}(X)(w)$ defines 
a morphism $\overline{X}(w) \to \overline{Y}(w)$.
Altogether we arrive at a commutative diagram
$$ 
\xymatrix{
&&&
\\
{\widehat{X}}
\ar[d]_{/ H(w)}^{q_X}
\ar@/^2pc/[rrrr]
\ar@{}[r]|\subseteq
&
{\overline{X}}
\ar[rr]
\ar[d]
& &
{\overline{Y}}
\ar[d]
& 
{\widehat{Y}}
\ar[d]^{/\!\!/ H(w)}_{q_Y}
\ar@{}[l]|\supseteq
\\
{\widehat{X}}(w)
\ar@{}[r]|\subseteq
\ar[dr]_{/\mathbb{C}^*}
&
{\overline{X}(w)}
\ar[rr]
& &
{\overline{Y}(w)}
&
{\widehat{Y}}(w)
\ar@{}[l]|\supseteq
\ar[dl]^{/\mathbb{C}^*}
\\
&
X
\ar[rr]
&
&
Y
&
}
$$
Here $\widehat{X} \subseteq \overline{X}$ and 
$\widehat{Y} \subseteq \overline{Y}$ are the 
respective unions of all localizations 
$\overline{X}_f$ and~$\overline{Y}_f$, where 
$f$ is of degree $w$, and the subsets 
$\widehat{X}(w) \subseteq \overline{X}(w)$ and 
$\widehat{Y}(w) \subseteq \overline{Y}(w)$
are defined analogously.
The downwards maps are quotients with respect to 
the action of the subgroup $H(w) \subseteq H$ 
corresponding to the map of character groups 
$\Cl(X) \to \Cl(X) / \mathbb{Z}w$.
Note that by ampleness of $D$,  the composition 
$\widehat{X} \to X$ is the characteristic space.

Since $R$ is almost freely factorially 
$\Cl(X)$-graded and $w$ lies in the relative 
interior of $\Mov(R)$, we infer 
from~\cite[Thm.~3.6]{Ha2} that also 
$\widehat{Y} \to Y$ 
is a characteristic space.
The resulting variety $Y$ is 
projective~\cite[Prop.~3.9]{Ha2}.
As a dominant morphism of projective varieties,
the induced map $X \to Y$ is surjective.
Since the GIT-cone $\lambda(w)$ is of full dimension,
the fibers of $\widehat{Y} \to Y$ are precisely 
the $H$-orbits, use~\cite[Lemma~3.10]{Ha2}.
The commutative diagram then yields that 
the $H$-equivariant morphism
$\widehat{X} \to \widehat{Y}$ is surjective.
Moreover, the complement 
$\overline{Y} \setminus \widehat{Y}$ is 
of codimension at least two in $\overline{Y}$, 
see~\cite[Constr.~3.11]{Ha2}.
Thus, by Richardson's Lemma, the 
birational morphism 
$\overline{X} \to \overline{Y}$ 
of normal affine varieties is an isomorphism. 
The assertion follows.
\end{proof}

\subsection{Proof of Theorem~\ref{teor}}
Let us give here all the necessary preliminary
lemmas to prove the main result of the section.
For each cubic elliptic variety
$X$ in Theorem~\ref{teor}
we construct the $\zz^4$-graded ring
\[
 R_n:=\cc[T,S]/\mathfrak{I},
\]
where $\mathfrak{I}$ is one of the four
ideals $\mathfrak{I}_1,\dots,\mathfrak{I}_4$.
Consider a factorially $K$-graded 
normal affine algebra $R = \oplus_K R_w$ 
with pairwise non-associated $K$-prime generators
$f_1, \ldots, f_r$ and set $w_i := \deg(f_i) \in K$.
\begin{remark}\label{irr}
Given an effective divisor $D$ we 
consider the subspace $V$ of $H^0(X,D)$,
generated by all the sections corresponding 
to reducible divisors. 
Observe that any system of generators of the Cox ring
of $X$ must contain all the elements of a basis
of $H^0(X,D)$ which are not in $V$.
\end{remark}

\begin{lemma}\label{subring}
Let $R_n$ be as before
Then the following hold.
\begin{enumerate}
\item $R_n$ is a subalgebra of $\R(X)$.
\item $R_n$ and $\R(X)$ have the same
quotient field.
\item $R_n$ is almost free factorially graded.
\end{enumerate}
\end{lemma}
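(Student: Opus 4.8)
The three assertions should be attacked in the order (1), (3), (2), since (1) produces a concrete embedding that the other two exploit. For (1), the plan is to use the algebraic machinery of Subsection~\ref{diag}: set $R$ equal to the Cox ring $\R(X)$, graded by $\Cl(X)=\zz^4$, let $\cc[T]$ be the polynomial ring whose variables index a chosen generating set of $\R(X)$ (the homogeneous coordinates $T_1,\dots,T_{n+2\text{ or }n+3}$ coming from $Y$, together with the sections $S_1,S_2,S_3$ cutting out the exceptional divisors $E_1,E_2,E_3$), and let $\alpha=\beta_i$ be the corresponding $\beta$-homomorphism from the table, which is an isomorphism onto its image because each $\beta_i(T_k)$ is $T_k$ times a Laurent monomial in the $S$'s. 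One then checks that the two (respectively one) listed generators of $\mathfrak{I}_i$ lie in the kernel $I=\ker\psi$: each such generator is, up to the indicated monomial denominator in the $S_j$, the image under $\beta_i$ of a polynomial defining a relation among the chosen generators of $\R(X)$ — concretely, the relations expressing the equation of $Y$ and the projection-from-$L$ relation in Cox coordinates. By Proposition~\ref{ideals}, with $J=\ker\rho$ the ideal of relations, the extension-contraction $J'=\mathfrak{I}_i$ of $\alpha(J)$ is contained in $I$, so the natural map $R_n=\cc[T,S]/\mathfrak{I}_i\to\R(X)$ is well defined; injectivity then follows because $R_n$ is a domain (this is where Lemma~\ref{vample}, or rather the irreducibility of the defining polynomial under the hypothesis that the coefficient of $T_n^3$ is nonzero, will be invoked) of the same Krull dimension $n+6$ as $\R(X)$, so a dominant map of irreducible affine varieties of equal dimension between a variety and a normal one forces the corresponding ring map to be injective once one knows the image is dense — or, more simply, one checks directly that the induced map on spectra is birational.

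For (3), almost-free factoriality is a purely combinatorial-plus-factoriality statement about $R_n$. First, reading off the grading matrix of the relevant $\mathfrak{I}_i$ from Theorem~\ref{teor}, one verifies by inspection (Laplace expansion along the last few columns, which form a unimodular upper-triangular block) that deleting any one column leaves a matrix whose maximal minors generate the unit ideal of $\zz$; this is the almost-freeness. For factoriality of the $\zz^4$-grading I would apply the standard criterion (e.g.\ \cite{BHK} or the dimension/Jacobian criterion for complete intersections): $R_n$ is a complete intersection defined by the one or two listed equations, it is normal by the Serre criterion once one checks the singular locus of $V(\mathfrak{I}_i)$ has codimension $\ge 2$ in $\Spec R_n$, and the grading group $\zz^4$ is free, so factorial grading follows from the fact that the relations are $\zz^4$-prime — which one sees from their irreducibility as the defining polynomials of $Y$ (again using the nonvanishing-coefficient hypothesis) together with the shape of the $S$-monomials.

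Assertion (2) is then immediate from (1): since $R_n\subseteq\R(X)$ are both domains and, by the dimension count just used, the inclusion is a finite birational extension — indeed the generators $T_i,S_j$ of $R_n$ are among the homogeneous coordinates generating $\R(X)$, and the only further generators of $\R(X)$ are sections that become redundant rationally once one inverts suitable monomials — the two rings share a common quotient field. Concretely, one exhibits every remaining generator of $\R(X)$ as a rational function in the $T_i,S_j$ by solving the relation $\beta_i(T_{n+3}-\cdots)=0$ for the distinguished coordinate and using it to eliminate.

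The main obstacle I expect is verifying, cleanly and uniformly across all four types, that the listed elements of $\mathfrak{I}_i$ really do generate the full relation ideal between the chosen Cox-ring generators — i.e.\ that no further relations are needed — and correspondingly that $R_n$ is normal (codimension-$\ge2$ singular locus). This amounts to a careful local analysis of the equations of $Y$ near the points $L\cap Y$ in each of the four cases $X_3,X_S,X_{S2},X_{SSS}$, and it is precisely here that the extra hypotheses on the coefficients (of $T_n^3$, of $T_{n+1}^2$ and $T_{n+1}T_{n+2}$ in $a_3$, etc.) are used to guarantee smoothness of $Y$ and irreducibility of the relevant cubic. The rest of the argument is formal, driven by Proposition~\ref{ideals} and linear algebra on the grading matrices.
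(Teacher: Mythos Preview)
Your overall strategy for (1) --- use the diagram of Subsection~\ref{diag} and Proposition~\ref{ideals} --- matches the paper's, but the setup is garbled and a key step is missing. In the paper's framework $\cc[T]$ is the polynomial ring in the $T$-variables \emph{only}, so that $\rho:\cc[T]\to R$ factors through the coordinate ring of $Y$ with kernel $J_i$ the defining ideal of (an embedding of) $Y$; and $R$ is taken to be the \emph{image} of $\psi:\cc[T,S]\to\R(X)$, not $\R(X)$ itself. More importantly, you assert $J'_i=\mathfrak{I}_i$ without argument. The inclusion $\mathfrak{I}_i\subseteq J'_i$ is automatic from the shape of the generators, but the reverse amounts to checking that each $\mathfrak{I}_i$ is already saturated with respect to $S_1S_2S_3$. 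This is a genuine case-by-case verification --- immediate for the principal $\mathfrak{I}_2$, but for the two-generator ideals one must rule out codimension-one components supported on $V(S_1S_2S_3)$ --- and it is precisely what makes $R_n$ a domain: by~\cite[Prop.~3.3]{BHK}, primality of $J_i$ transfers to $J'_i$, hence to $\mathfrak{I}_i$ once the equality is known. The nonvanishing of the $T_n^3$ coefficient plays \emph{no} role in this lemma; that hypothesis enters only in Lemma~\ref{vample} to force the GIT chamber $\lambda([W])$ to be full-dimensional, so your attribution is misplaced. (Also, $\dim R_n=\dim\R(X)=n+4$, not $n+6$.)

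For (3) your hands-on check of almost-freeness via minors of the grading matrix is fine, but your route to factorial grading is shaky: ``complete intersection, normal, relations $K$-prime'' is not by itself a criterion for a factorially $K$-graded algebra. The paper instead notes that the identification $\mathfrak{I}_i=J'_i$ already places $R_n$ in the setting of~\cite[Thm.~3.2 and Cor.~3.4]{BHK}: the homogenization construction preserves factorial grading and realizes $R_n$ as the Cox ring of a toric ambient modification of $Y$, which is automatically almost-freely graded. This is shorter and sidesteps the singular-locus computation you flag as the main obstacle.
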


\begin{proof}
Each grading
matrix $Q_n$ is written with respect
to the basis $(H,E_1,E_2,E_3)$.
We are going to show that the columns
of any such matrix are degrees
of generators of the Cox ring according
to Remark~\ref{irr}.
We will proceed in two steps.
First of all we will construct 
in each case an homomorphism of rings:
\[
 \psi: \cc[T,S]\to\R(X)
\]
which maps the generators 
$T_j$ and $S_k$ to certain
sections of the Cox ring.
Then we will show that the kernel of $\psi$
is the defining ideal of $R_n$.

Any irreducible divisor of Riemann-Roch 
dimension one is a generator of the Cox ring.
Among these there are the exceptional divisors
corresponding to the last three columns 
of each matrix $Q_n$.
We also have the strict transforms of the intersections
of $Y$ with a tangent hyperplane, corresponding to 
the columns whose first entry is 1, and at least one of the 
others entries is smaller than $-1$.
Moreover, by Proposition~\ref{floppic} the classes $[2,-3,0,0]$ of type $X_3$ and
$[2,-3,-3,0]$ of type $X_{S2}$ are the flop images of
$[1,-2,-1,0]$ and $[1,-2,0,-1]$ respectively.

We now claim that if $D$ is an effective irreducible divisor 
with class $H-m_1E_1-m_2E_2-m_3E_3$, then 
either $m_i\leq 1$ for each $i=1,2,3$ or $D$ is
the intersection of $Y$ with a tangent hyperplane.
Indeed if the three points are distinct then the
claim is obvious. Otherwise let us assume for example
that $p_2$ is a point of the exceptional divisor
on $p_1$. Then $e_1-e_2$ is the class in $A_1(X)$
of a fiber of the $\pp^1$-bundle $E_2-E_1$ and 
$D\cdot(e_1-e_2)=m_1-m_2\geq 0$ since $D$ is irreducible
and distinct from $E_1-E_2$. Hence the biggest multiplicities
are those of the points in $L\cap Y$ and the claim follows.

Since we already considered the sections of $Y$
with a tangent hyperplane, 
from the previous claim we now concentrate on the case 
in which all the $m_i$ are less than or equal to 1.
Observe that $H^0(X,\pi^*\Osh_{\pp^{n-1}}(1))$
contains no reducible sections when $X$
is of type $X_{SSS}$ and 
just one reducible section for the 
remaining three types. Hence
by Remark~\ref{irr} we get the columns of degree
$[1,-1,-1,-1]$ 
(they are $n$ 
in type $X_{SSS}$ and $n-1$ otherwise).
Moreover, when $X$ is of type 
$X_3$, $X_S$ or $X_{S2}$, the
Riemann-Roch dimension of a divisor of
degree $[1,-1,0,0]$ is $n+1$, while with the previous 
generators one can only form a $n$-dimensional
subspace. Hence, again by Remark~\ref{irr}
we add a generator in this degree and a similar
argument applies to $[1,0,0,0]$ for $X_3$ and $X_S$.

We have thus defined the homomorphism
$\psi$. Since we are considering four
cases, let us denote by $\psi_i$, for
$i\in\{1,\dots,4\}$, these homomorphisms.
By the definition of $\beta_i$, the homomorphism 
$\rho_i:=\psi_i\circ\beta_i$, 
is just the composition of the natural map
$\cc[T]\to\R(Y)$
with the pull-back map $\R(Y)\to\R(X)$.
If we denote by $J_i$ the kernel of $\rho_i$, we have that 
\[
\begin{array}{rcl}
J_1  & = & \langle
  T_{n+3} -T_{n+1}\,a_1-b',\,
  T_{n+2}\,T_{n+3}+T_{n+1}\,a'+b_1
 \rangle,\\[3pt]
J_2 & = &\langle T_{n+1}\,a_2+b_2\rangle,\\[3pt]
J_3 & = & \langle
  T_{n+3} - a_3,\,
  T_{n+1}\,T_{n+3} - b_3
 \rangle,\\[3pt]
J_4  &  = &
 \langle
  T_{n+3} - a_4,\,
  T_{n+1}\,T_{n+2}\,a_4+b_4
 \rangle.
 \end{array}
\]
By the generality assumptions on the polynomials
$a_i,b_i,c_i$ and $d_i$ we have that each $J_i$
is prime.
For each of the four cases we now refer 
to diagram~\eqref{pre} where the ring
$R$ in the diagram is the image
of $\psi_i$.
By Proposition~\ref{ideals} the
contraction and extension $J'_i$
of the ideal $\alpha_i(J_i)$ is contained
in $I_i:=\ker(\psi_i)$. 
By~\cite[Proposition 3.3]{BHK} and
the fact that $J_i$ is prime, we deduce that
also $J'_i$ is prime. 
We are now going to prove that 
\[
 \mathfrak{I}_i = J'_i
 \quad
 \text{for $i\in\{1,\dots,4\}$},
\]
where $\mathfrak{I}_i$ is the $i$-th ideal 
appearing in Theorem~\ref{teor}.
This is equivalent to show
that each ideal $\mathfrak{I}_i$ is saturated with respect to
the variables $S$. For $\mathfrak{I}_2$ this is
straightforward since it is principal and the generator
is irreducible.
In the remaining cases, since each ideal 
$\mathfrak{I}_i$ has two generators
it is enough to prove that there are no components
of codimension one in $V(S_1\,S_2\,S_3)$. 
The second generator of $\mathfrak{I}_4$
is a polynomial in the $T_j$ and hence there is nothing
to prove. The second generator of $\mathfrak{I}_1$
can be written as 
$f:=T_{n+2}\,T_{n+3}+\beta_1(T_{n+1}a'+b_1)S_1^{-3}S_2^{-3}S_3^{-3}$.
The first monomial is $T_{n+2}\,T_{n+3}$, while
the sum of the remaining monomials does not
contain these two variables and does not vanish identically 
on $V(S_i)$. Hence $V(f,S_i)$ is irreducible and since
$f$ does not divide the first generator, then there are
no components of codimension 1 in $V(S_1\,S_2\,S_3)$.
A similar analysis applies to $\mathfrak{I}_3$.

We proved that each $R_n=\cc[T,S]/\mathfrak{I}_i
=\cc[T,S]/J'_i$ is a domain.
Moreover $J_i'\subset I_i$
implies that $R\subset R_n$ and we claim
that $R_n= R$. By construction
we know that $\dim R_n=n+4$. 
Observe now that the field of rational 
functions of $Y$ has dimension $n$ and is equal to 
the field of homogeneous fractions of $R$. Since $R$ 
is graded by $\zz^4$ we conclude
that also $\dim R=n+4$.
Moreover $R$ is a domain too
since it is contained in $\R(X)$.
We conclude by observing that
$R$ and $R_n$ are domains 
of the same dimension and hence the inclusion 
$R\subset R_n$ implies that $R=R_n$.
This proves (1).

Part (2) of the statement follows from
the fact that both $R_n$ and $\R(X)$ contain
the homogeneous coordinate ring of 
the cubic hypersurface $Y$ as a subring.

According to~\eqref{pre} the ideal $\mathfrak{I}$
of $\cc[T,S]$ is obtained by extending and 
contracting the homogeneization $\alpha(J)$
of the ideal $J$. Hence $R_n$ is factorially
graded by~\cite[Theorem 3.2]{BHK} and
it is almost free graded since by~\cite[Corollary 3.4]{BHK}
it is the Cox ring of a toric ambient modification of $Y$.
This proves (3).

\end{proof}

According to Lemma~\ref{subring} the
algebra $R_n$ is a subalgebra of
the Cox ring $\R(X)$. 
Let $f_1$ be the generator of $R_n$ 
corresponding to the variable $T_1$
and let $D$ be the divisor of $X$ 
defined by $f_1$. In what follows
with abuse of notation we will denote by 
the same symbol the divisor $D$ and its 
support.

\begin{lemma}\label{rinascente}
The following properties hold:
\begin{enumerate}
\item $R_{n-1}$ is isomorphic to 
$R_n/\langle T_1\rangle$ for any $n>3$;
\item $D$ is a cubic elliptic variety of the same type
of $X$, of dimension $n-1$;
\item there is a surjective morphism 
$\R(D)\to\R(D')$, where $D'$ is the image of
$D$ via some composition of the $\sigma_i$.
\end{enumerate}
\end{lemma}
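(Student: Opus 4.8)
The plan is to prove the three statements together by exploiting the
self-similar structure of the construction in the variable $T_1$.
First I would observe that the grading matrix $Q_n$ and the ideal
$\mathfrak{I}_i$ have been set up so that the variable $T_1$ plays the
same role as $T_2,\dots,T_{n-1}$: its degree is the column
$[1,-1,-1,-1]^{\mathrm t}$, and the polynomials $a',b',a_i,b_i,c_i$
defining $Y$, when specialised by $T_1=0$, are again polynomials of
the same shape in one fewer variable, still satisfying the genericity
conditions of Table~\ref{cox-types}. Hence setting $T_1=0$ in
$\cc[T,S]$ sends $\mathfrak{I}_i$ onto the ideal
$\mathfrak{I}_i'$ of the same shape built from the cubic $Y'\subset
\pp^{n}$ obtained by intersecting $Y$ with the hyperplane
$V(T_1)$ of $\pp^{n+1}$. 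Since $\mathfrak{I}_i=J_i'$ is prime (by
Lemma~\ref{subring}) and does not contain $T_1$, the quotient
$R_n/\langle T_1\rangle$ is a domain of dimension $(n+4)-1=(n-1)+4$,
which equals $\dim R_{n-1}$, and the natural surjection
$R_n/\langle T_1\rangle\to R_{n-1}$ between domains of equal dimension
is an isomorphism. This gives~(1).

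For~(2) I would identify geometrically the divisor $D=V(f_1)\subset
X$. Since $f_1=T_1$ has degree $H-E_1-E_2-E_3$, the divisor $D$ is a
member of the linear system $|\pi^*\Osh_{\pp^{n-1}}(1)|$, more
precisely $D=\sigma^{-1}(Y\cap V(T_1))$ where $V(T_1)$ is a general
hyperplane of $\pp^{n+1}$ containing the line $L$ (one checks from the
equations of Table~\ref{cox-types} that $T_1$ appears among the
ambient coordinates and $L\subset V(T_1)$ in each of the four cases).
Thus $D$ is itself obtained by resolving the projection of the smooth
cubic $(n-1)$-fold $Y'=Y\cap V(T_1)\subset\pp^{n}$ from the line $L$,
and the configuration of points $L\cap Y'=L\cap Y$ is unchanged, so
$D$ is a cubic elliptic variety of the very same type
($X_3$, $X_S$, $X_{S2}$ or $X_{SSS}$) and of dimension $n-1$. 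Part~(1)
then identifies $\R(D)$ with $R_{n-1}$, which is the correct Cox ring
by induction via Theorem~\ref{teor}; alternatively, $\R(D)\cong
R_n/\langle T_1\rangle$ directly, since restriction of the total
coordinate ring to the prime divisor $D$ kills exactly the section
$f_1$.

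For~(3) let $D'$ be the image of $D$ under one of the composed
contractions $\sigma_j\circ\dots\circ\sigma_3$; concretely $D'$ is the
intersection of the partially-blown-up cubic $Y_k$ with the pull-back
of the hyperplane $V(T_1)$, so $D'$ is again a hyperplane section of a
blow-up of $Y$ and in particular a Mori dream space whose Cox ring is
a quotient of that of $D$. The surjection $\R(D)\to\R(D')$ is then the
one induced on Cox rings by the toric ambient modification: restricting
the homomorphism $\beta_i$ and $\psi_i$ to the subring generated by the
$T$-variables surviving on $D'$ produces a graded surjection, and
factoring through the corresponding $S$-variables set to a common value
realises $\R(D')$ as a homogeneous localisation-and-restriction of
$\R(D)=R_{n-1}$; compactness of the GIT-chambers guarantees the
required surjectivity on global sections.

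The main obstacle I expect is~(2): one must check, case by case using
the explicit equations in Table~\ref{cox-types}, that the divisor cut
out by the chosen generator $f_1$ really is the strict transform of a
hyperplane section through $L$ whose intersection with $Y$ is again
smooth and has the same star/non-star pattern at $L\cap Y$ — in other
words that a generic such hyperplane section inherits the type. Once
this geometric identification is secured, parts~(1) and~(3) are formal
consequences of the algebraic framework of Section~\ref{diag} together
with Lemma~\ref{subring}.
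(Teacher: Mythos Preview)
Your approach to (1) and (2) is essentially the paper's: the paper says (1) ``follows directly from the definition of $R_n$'' and (2) ``is implied by the fact that $D$ is the pull-back of a hyperplane of $\pp^{n-1}$ via the elliptic fibration $\pi$''. Your expanded account of (1) via the specialisation $T_1=0$ and the dimension count is a reasonable unpacking of this.

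Two points deserve correction. First, in (2) your ``alternative'' claim that $\R(D)\cong R_n/\langle T_1\rangle$ holds \emph{directly} because ``restriction of the total coordinate ring to the prime divisor $D$ kills exactly the section $f_1$'' is not justified: the restriction map $\R(X)\to\R(D)$ is not in general surjective with kernel $\langle f_1\rangle$, and this identification is precisely what the induction in the proof of Theorem~\ref{teor} is set up to establish. The lemma itself only asserts that $D$ is of the correct type, so that the induction hypothesis can later be invoked; do not smuggle the conclusion in here. (Also, $D$ is the \emph{strict transform} of $Y\cap V(T_1)$, not $\sigma^{-1}(Y\cap V(T_1))$.)

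Second, your argument for (3) is muddled. The paper's proof is a single sentence: each composition of the $\sigma_i$ restricts to a blow-down $D\to D'$, and blow-downs are toric ambient modifications in the sense of~\cite[Remark~3.6]{BHK}, which by that reference induce surjections on Cox rings. You do invoke the phrase ``toric ambient modification'', but the subsequent talk of ``factoring through the corresponding $S$-variables set to a common value'' and ``compactness of the GIT-chambers guarantees the required surjectivity'' does not describe any actual mechanism and should be dropped in favour of the direct citation.
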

\begin{proof}
(1) follows directly from the definition of $R_n$, 
while (2) is implied by the fact that $D$ is 
the pull-back of a hyperplane of $\pp^{n-1}$ 
via the elliptic fibration $\pi: X\to\pp^{n-1}$.
(3) follows from the fact that each composition
of the $\sigma_i$ is a blow-down and then it 
is a toric ambient modification in the sense 
of~\cite[Remark 3.6]{BHK}.
\end{proof}

\begin{lemma}\label{vample}
Let $X$ be a cubic elliptic $n$-dimensional
variety of type $X_3$, $X_S$, $X_{S2}$ 
or $X_{SSS}$ and let $W=4H-3E_1-2E_2-E_3$.
Then the following hold.
\begin{enumerate}
\item The divisor $W$ is very ample.
\item The GIT chamber $\lambda([W])
\subseteq\Mov(R_n)$ is full-dimensional.
\end{enumerate}
\end{lemma}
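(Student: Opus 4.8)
The plan is to establish the two parts of Lemma~\ref{vample} separately, using the inductive structure provided by Lemma~\ref{rinascente}.

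For part (1), the strategy is induction on $n$. The base case $n=3$ is a surface computation: on a cubic elliptic surface of the relevant type, one checks directly that $W=4H-3E_1-2E_2-E_3$ has positive intersection with every generator of the Mori cone (the generators are listed in the proof of Proposition~\ref{nefX}), so $W$ is ample, and on a surface ample plus the fact that $W-K_X$ is big gives very ampleness via a standard Reider-type or Riemann--Roch argument; here the extra hypothesis that the coefficient of $T_n^3$ in $b_1$ (resp. $b_2$) is non-zero enters, ensuring the relevant sections separate points and tangent vectors along the fiber direction. For the inductive step, let $D\in|H|=|\pi^*\Osh_{\pp^{n-1}}(1)|$ be the divisor cut out by $f_1$. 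By Lemma~\ref{rinascente}(2), $D$ is a cubic elliptic variety of the same type and dimension $n-1$, and by Lemma~\ref{rinascente}(1) its Cox ring is $R_{n-1}=R_n/\langle T_1\rangle$, so $W|_D$ is the analogous divisor class $4H'-3E_1'-2E_2'-E_3'$ on $D$, which is very ample by induction. The plan is then to use the standard exact sequence
\[
 0\to\Osh_X(W-H)\to\Osh_X(W)\to\Osh_D(W|_D)\to 0,
\]
show $H^1(X,\Osh_X(W-H))=0$ so that sections of $W$ surject onto sections of $W|_D$, and conclude that $|W|$ separates points and tangent vectors: two points in different fibers are separated by the pullbacks of hyperplanes, two points in the same $D$ are separated since $W|_D$ is very ample, and the vanishing lets us lift. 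The cohomology vanishing should follow because $W-H-K_X$ is nef and big: indeed $-K_X=(n-1)F$ with $F=H-E_1-E_2-E_3$ semiample, and $W-H=3H-3E_1-2E_2-E_3$ can be written as a non-negative combination of $F$ and the nef generators from Proposition~\ref{nefX}, so Kawamata--Viehweg applies.

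For part (2), the goal is to show $[W]$ lies in the relative interior of $\Mov(R_n)$. Recall from the discussion before Proposition~\ref{criterion} that $\Mov(R_n)$ is the intersection of all cones spanned by any $r-1$ of the generator degrees, equivalently the cone of classes that are movable; and $\lambda([W])$ is full-dimensional exactly when $[W]$ is not on the boundary of any of these GIT-cones, i.e. $[W]$ lies in the interior of $\Mov(R_n)$. Since $W$ is very ample, hence ample, its class lies in the interior of $\Nef(X)$; and by Propositions~\ref{SSS3S} and~\ref{3} we have $\Nef(X)\subseteq\Mov(X)$ with $\Mov(X)$ equal to $\Nef(X)$ (types $X_S$, $X_{SSS}$) or $\Nef(X)\cup\varphi^*\Nef(X)$ (types $X_3$, $X_{S2}$). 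The plan is to verify that $\Mov(R_n)$, computed from the explicit generator degrees in the grading matrices of Theorem~\ref{teor}, coincides with $\Mov(X)$, and that the chamber structure on $\Mov(R_n)$ refines into exactly these one or two nef chambers; then an ample class, being interior to $\Nef(X)$, automatically has full-dimensional GIT-cone. Concretely one lists the $r-1$ subsets of columns of $Q_n$, intersects the cones they generate, and checks the answer has the two generators of $\Nef(X)$ (and of $\varphi^*\Nef(X)$) on its boundary while containing $[W]$ in its interior — a finite, if tedious, polyhedral computation.

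The main obstacle I expect is the base case $n=3$ of part (1): proving very ampleness of $W$ on the cubic elliptic surfaces requires genuinely using the coefficient hypotheses on $b_1$, $b_2$ and a careful analysis of how sections behave along the reducible and multiple fibers, rather than a formal cohomological argument. Once that anchor is in place, the inductive step and part (2) are comparatively mechanical, relying on the cohomology vanishing for $W-H-K_X$ and the already-computed nef and moving cones.
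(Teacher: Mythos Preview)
Your proposal has real gaps in both parts, and misplaces the role of the coefficient hypothesis.

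\textbf{Part (1).} The inductive scheme is both unnecessary and, as written, flawed. First, when $n=3$ the variety $X$ is a threefold (recall $\dim Y=\dim X=n$), not a surface. Second, the divisor $D$ cut out by $f_1$ has class $F=H-E_1-E_2-E_3=\pi^*\Osh_{\pp^{n-1}}(1)$, not $H$; your exact sequence and nef-and-big check should involve $W-F$, not $W-H$. Third, Lemma~\ref{rinascente}(1) says only $R_{n-1}\cong R_n/\langle T_1\rangle$; identifying this with $\R(D)$ is the content of Theorem~\ref{teor} in dimension $n-1$, which itself relies on Lemma~\ref{vample}, so the appeal is circular. The paper avoids all of this with a direct, uniform-in-$n$ argument: write
\[
 W=(H-E_1-E_2-E_3)+(H-E_1-E_2)+(H-E_1)+H,
\]
a sum of semiample classes (each the pull-back of an ample class along some partial blow-down $\sigma_i$). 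Hence $|W|$ is base-point-free; since the summands span $\Pic(X)_\qq$ and lie in $\Nef(X)$, the class $W$ is interior to $\Nef(X)$ and therefore ample; and since $|H|$ already defines the birational contraction $X\to Y$, the morphism given by $|W|$ has degree one. An ample, base-point-free linear system defining a degree-one morphism is very ample. No coefficient hypothesis enters here.

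\textbf{Part (2).} Your key claim, that $\lambda([W])$ is full-dimensional exactly when $[W]$ lies in the interior of $\Mov(R_n)$, is false: the GIT fan has internal walls, so an interior point of $\Mov(R_n)$ may well sit in a lower-dimensional GIT cone. The chamber structure is governed by the \emph{orbit cones}, and whether a given $\mathrm{cone}(w_i:i\in I)$ is an orbit cone depends on the ideal $\mathfrak I$, not just on the grading matrix; your proposed ``polyhedral computation'' with the columns of $Q_n$ alone cannot detect this. The paper argues directly: one checks that $[W]$ lies in the relative interior of no two-dimensional cone $\mathrm{cone}(w_\alpha,w_\beta)$, then lists the three-dimensional cones $\mathrm{cone}(w_\alpha,w_\beta,w_\gamma)$ containing $[W]$ in their relative interior. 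For each such $I=\{\alpha,\beta,\gamma\}$ the specialized ideal $\mathfrak I^I$ (set the variables outside $I$ to zero) is shown to contain a monomial in the remaining variables; this forces the corresponding coordinate stratum of $\overline X=\mathrm{Spec}\,R_n$ to be empty, so the cone is not an orbit cone. Since every orbit cone containing $[W]$ in its relative interior is then full-dimensional, so is their intersection $\lambda([W])$. The hypothesis that the coefficient of $T_n^3$ in $b_1$ (type $X_3$) or $b_2$ (type $X_S$) is non-zero is used precisely \emph{here}, to produce the monomial $T_n^3$ in $\mathfrak I^I$ for the relevant index sets; you have located this hypothesis in the wrong half of the lemma (and indeed the Remark following Theorem~\ref{teor} confirms that dropping it breaks part~(2), not part~(1)).
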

\begin{proof}
We begin by proving (1).
Writing $W$ as
\[
 W=(H-E_1-E_2-E_3) + (H-E_1-E_2) + (H-E_1) + H,
\]
we observe that it is ample since 
it lies in the interior of the nef cone
of $X$ by Proposition~\ref{nefX}.
The linear system $|W|$ is base point
free since all the summands in the above 
sum are base point free.
Finally the morphism defined by the linear
system $|H|$ is birational, since it is just
the contraction $X\to Y$. Hence $|W|$
is an ample and spanned linear system
which defines a degree one morphism
and thus $W$ is very ample.

Let $w_i$ be the degree of the $i$-th
generator of $R_n$, that is the $i$-th
column of the corresponding grading 
matrix given in Theorem~\ref{teor}.
A direct calculation shows that
the class $w$ of $W$ is not contained
in any two-dimensional cone spanned
by the $w_i$. The three-dimensional
cones ${\rm cone}(w_\alpha,w_\beta,w_\gamma)$
which contain $w$ into their relative 
interiors correspond to the sets 
of indices
$I=\{\alpha,\beta,\gamma\}$ given
in the table below, where $i\in\{1,\dots,n-1\}$
and $f_j$ denotes the $j$-th generator
of the ideal $\mathfrak{I}$ given in 
Theorem~\ref{teor}.

\begin{center}
\footnotesize
\begin{tabular}{c|c|c|c}
 $X_3$ & $\{i,n+1,n+2\}$ & $\{i,n,n+4\}$ & $\{n+3,n+4,n+5\}$\\[5pt]
 & $f_1^I = T_{n+1}T_{n+2}^2$ & $f_2^I = T_n^3$ & $f_1^I=T_{n+3}S_1$\\[5pt]
 \midrule
 $X_S$ & $\{i,n,n+3\}$ & & \\[5pt]
 & $f_1^I = T_n^3$ & & \\[5pt]
 \midrule
 $X_{S2}$ & $\{i,n+2,n+4\}$ & $\{n,n+2,n+4\}$\\[5pt]
 & $f_1^I = T_{n+2}^2$ & $f_1^I = T_{n+2}^2$ & \\[5pt]
 \end{tabular}
\end{center}
Let $\mathfrak{I}$ be the 
ideal of relations of $R_n$.
Let $T_{n+2+i}=S_i$ if $X$ is of type $X_3$ and
$T_{n+3+i}=S_i$ otherwise.
For any subset of indices
$I$ define the ideal
$\mathfrak{I}^I= 
\langle f^I : f\in\mathfrak{I}\rangle$,
where
\[
 f^I
 =
 f(U_1,\dots,U_{n+5})
 \quad
 \text{ where }
 \quad
 U_k
 =
 \begin{cases}
  T_k &\text{ if }k\in I\\
  0 & \text{ otherwise.}
 \end{cases}
\]
For any set of indices $I$ in the above
table, the ideal $\mathfrak{I}^I$ 
contains the monomial $f^I$
due to our assumption on the
defining equations written in 
Table~\ref{cox-types}.
This allows us to conclude that
the corresponding cone 
${\rm cone}(w_\alpha,w_\beta,w_\gamma)$
with $I=\{\alpha,\beta,\gamma\}$
is not an orbit cone.
Since $\lambda(w)$ is the intersection
of all the orbit cones which
contain $w$ into their relative interior
and since all such cones are full-dimensional
then we conclude that $\lambda(w)$
is full-dimensional as well.
\end{proof}

\begin{lemma}\label{case3}
Let $X$ be a cubic elliptic threefold of
type $X_3$, $X_S$, $X_{S2}$
or $X_{SSS}$. Then the Cox ring 
of $X$ is isomorphic to $R_3$.
\end{lemma}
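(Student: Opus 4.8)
The plan is to apply Proposition~\ref{criterion} with $R = R_3$, using the very ample divisor $W = 4H - 3E_1 - 2E_2 - E_3$ supplied by Lemma~\ref{vample}. By Lemma~\ref{subring}, $R_3$ is a finitely generated normal almost freely factorially $\Cl(X)$-graded subalgebra of $\R(X)$ with the same quotient field, and by Lemma~\ref{vample}(2) the GIT chamber $\lambda([W]) \subseteq \Mov(R_3)$ is full-dimensional. So the only hypothesis of Proposition~\ref{criterion} left to check is the equality of homogeneous components $R_3{}_{[W]} = \R(X)_{[W]}$; once this is in hand, Proposition~\ref{criterion} gives $R_3 = \R(X)$ and we are done. (Here one also needs to know that $\R(X)$ itself is finitely generated, which for the threefold cases $X_3$, $X_S$, $X_{S2}$, $X_{SSS}$ follows from Theorem~\ref{moridream} since these are exactly the extremal cases.)

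The heart of the argument is therefore the identity $R_3{}_{[W]} = \R(X)_{[W]}$ in the single degree $[W]$. First I would note the inclusion $R_3{}_{[W]} \subseteq \R(X)_{[W]}$ is automatic from Lemma~\ref{subring}(1), so it suffices to compare dimensions, i.e.\ to show $\dim_\cc R_3{}_{[W]} \geq h^0(X, W)$. Since $W$ is very ample (Lemma~\ref{vample}(1)) and, by the decomposition $W = (H - E_1 - E_2 - E_3) + (H - E_1 - E_2) + (H - E_1) + H$ written there, the morphism attached to $|W|$ is the composite of the contraction $X \to Y$ with a Veronese-type embedding, one has $h^0(X, W) = h^0(Y, \Osh_Y(3) \otimes \I)$ for the appropriate twisted ideal sheaf $\I$ of the (possibly infinitely near) points $L \cap Y$ with the prescribed multiplicities $(3,2,1)$; this is a concrete, computable number in terms of $n = 3$. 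On the other side, $R_3{}_{[W]}$ is spanned by the images under $\psi$ of the degree-$[W]$ monomials in $\cc[T,S]$ modulo the ideal $\mathfrak{I}$, and the grading matrix from Theorem~\ref{teor} lets one enumerate these monomials explicitly. The plan is to exhibit enough monomials of degree $[W]$ whose $\psi$-images are linearly independent in $\R(X)$ — equivalently, by Remark~\ref{irr}, to argue that the generators $T_j, S_k$ of $R_3$ already account for a spanning set of $H^0(X,W)$ modulo reducible sections, and that the reducible sections of $W$ are products of lower-degree generators, hence also lie in $R_3$. Lemmas~\ref{rinascente} (in the base case $n = 3$ the divisor $D$ degenerates, so one uses it only to keep track of the restriction structure) and the explicit ideals $J_i$ from the proof of Lemma~\ref{subring} give the book-keeping needed to see that every product of a line-class section with a hyperplane-section class that could contribute to $H^0(X,W)$ is already visible in $R_3$.

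I expect the main obstacle to be precisely this dimension count in degree $[W]$: one must show there are no \emph{extra} sections of $W$ on $X$ beyond those built from the generators of $R_3$, which amounts to verifying that the naive count of monomials of degree $[W]$ in $R_3$ (after imposing the two relations) matches $h^0(X,W)$ exactly, with no deficiency. This is where the extra hypotheses in Table~\ref{cox-types} — the nonvanishing of the coefficient of $T_n^3$ in cases $X_3$ and $X_S$, and the vanishing of the $T_{n+1}^2$ and $T_{n+1}T_{n+2}$ coefficients in $X_{S2}$ — enter in an essential way, exactly as flagged before Lemma~\ref{vample}: they guarantee that certain candidate relations among the $\psi$-images do not secretly drop the dimension, and they are what make the monomials $f_1^I = T_{n+1}T_{n+2}^2$, $f_2^I = T_n^3$, etc.\ appearing in the proof of Lemma~\ref{vample} genuinely lie in the restricted ideals $\mathfrak{I}^I$. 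Once the equality in degree $[W]$ is established case by case (four short computations, one per type, all in dimension three), Proposition~\ref{criterion} closes the argument.
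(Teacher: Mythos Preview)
Your overall strategy matches the paper's exactly: invoke Proposition~\ref{criterion} with the very ample class $W = 4H-3E_1-2E_2-E_3$, so that everything reduces to the single equality $(R_3)_{[W]} = \R(X)_{[W]}$, and then verify this by a dimension count. The paper's execution of that count is cleaner and more direct than what you sketch: on the $R_3$ side it uses the Koszul resolution of $\cc[T,S]/\mathfrak{I}$ (one relation for $X_S$, two for the other types, and the ideal is a complete intersection in each case) together with a machine computation of the graded Hilbert function, obtaining $\dim (R_3)_{[W]} = 66,\,53,\,64,\,75$ respectively; on the $\R(X)$ side it simply reads off $h^0(X,W)$ from Riemann--Roch on the threefold $X$ and checks the numbers agree. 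Your plan to push forward to $Y$ and enumerate monomials by hand would also work in principle but is messier --- and note a small slip: $W$ has coefficient $4$ at $H$, so the comparison on $Y$ would be with sections of $\Osh_Y(4)$ subject to the multiplicity conditions, not $\Osh_Y(3)$. Finally, Lemma~\ref{rinascente} plays no role in this base case; it is only used in the inductive step for $n>3$ in the proof of Theorem~\ref{teor}.
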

\begin{proof}
Denote by $A$ the polynomial ring $\cc[T,S]$. 
If $X = X_{S}$, then a presentation of $R_3$ is
the Koszul complex:
\[
 \xymatrix{
  0\ar[r] & A(-w_1)\ar[r]
  & A\ar[r] & 0,
 }
\]
where $w_1=[3,-3,0,0]$ is the degree
of the generator of $\mathfrak{I}_2$.
If $X$ is one of the remaining types
then a presentation of $R_3$ is the Koszul 
complex:
\[
 \xymatrix{
  0\ar[r] & A(-w_1-w_2)\ar[r]
  & A(-w_1)\oplus A(-w_2)\ar[r] &
  A\ar[r] & 0,
 }
\]
where $w_1$ and $w_2$ are the degrees
of the generators of the ideal $\mathfrak{I}_i$
for $i\in\{1,3,4\}$.
A computer calculation done by using
the previous exact sequences shows that the 
dimension of the degree $w$ part of $R_3$
is $66$, $53$, $64$ and $75$ for the types
$X_3$, $X_S$, $X_{S2}$ and $X_{SSS}$ 
respectively.
In each case this dimension equals the 
Riemann-Roch dimension of the
class $w$. Hence $(R_3)_w=\R(X)_w$
and we conclude by Lemma~\ref{subring},
Lemma~\ref{vample} and
Proposition~\ref{criterion}.
\end{proof}

\begin{proof}[Proof of Theorem~\ref{teor}]
We proceed by induction on $n$.
The case $n=3$ is proved in Lemma~\ref{case3}.
Assume $n>3$. 
Observe that $H-E_1-E_2-E_3$ is linearly
equivalent to the divisor $D$ of $X$ 
defined by $f_1$ and that its  
push-forwards via 
$\sigma=\sigma_1\circ\sigma_2\circ\sigma_3$, 
$\sigma_2\circ\sigma_3$ and $\sigma_3$ 
equal those of $H$, 
$H-E_1$ and $H-E_1-E_2$ respectively.
According to 
Proposition~\ref{criterion}
it is enough to show that the degree $w$
part of $R_n$ equals that of $\R(X)$.
To this aim we consider the exact sequence
\[
 \xymatrix@1{
  0\ar[r] &
  H^0(X,W-D)\ar[r]^-{\cdot f_1} &
  H^0(X,W)\ar[r] &
  H^0(D,W|_D)\ar[r]\ar@/^15pt/[l]^\gamma &
  0,
 }
\]
where the last $0$ is due to Kawamata-Viehweg 
and the fact that $W-D-K_{X}$ is nef and big.
By the induction hypothesis and
our choice of $D$ we have
a surjective map $R_n\to R_{n-1}=\R(D)$.
This allows us to construct a section
$\gamma$ whose image is contained 
in $R_n$.

We claim that any section of $H^0(X,W-D)$
is in $R_n$ and this is enough to conclude.
The divisor $W-D=(H-E_1-E_2) + (H-E_1) + H$
is the pull-back of a divisor $W_2$ of $Y_2$.
Denote by $D_2$ the divisor of $Y_2$ which
is the image of $D$ via $\sigma_3$.
As before there is an exact sequence
\[
 \xymatrix{
  0\ar[r] &
  H^0(Y_2,W_2-D_2)\ar[r] &
  H^0(Y_2,W_2)\ar[r] &
  H^0(D_2,W_2|_{D_2})\ar[r]\ar@/^15pt/[l]^{\gamma_2} &
  0,
 }
\]
where the last $0$ is due to Kawamata-Viehweg and the fact that
$W_2-D_2-K_{Y_2}$ is linearly equivalent
to ${\sigma_3}_*(H-E_1+H)-K_{Y_2}$ which is nef and big.
By Lemma~\ref{rinascente} and the fact
that $\sigma_3: X\to Y_2$ is a toric ambient 
modification we get the following
diagram, where all the maps but the inclusion
$R_n\to\R(X)$ are surjective.
\[
 \xymatrix{
  R_n\ar[r]\ar@{->>}[d] & \R(X)\ar@{->>}[r] & \R(Y_2)\\
  R_{n-1} \ar@{=}[r] & \R(D) \ar@{->>}[r] & \R(D_2)
 }
\]
This allows us to construct a section
$\gamma_2: \R(D_2)_{w_2}\to \R(Y_2)_{w_2}$
whose image is contained in the image
of $R_n$.
Now we proceed in a similar way with the divisor
$W_2-D_2={\sigma_3}_*(2H-E_1)$
obtaining the divisors $W_1 = 
(\sigma_2\circ\sigma_3)_*(2H-E_1)$ and
$D_1 = (\sigma_2\circ\sigma_3)_*(H-E_1)$,
so that $W_1-D_1$ is pull-back of 
the divisor $\sigma_*(H)$ on $Y$. 
This last divisor is a hyperplane
section of $Y$ and thus a Riemann-Roch basis
consists of elements of the coordinate ring of $Y$
which is a homomorphic image of $R_n$.
This proves the claim.
Hence $(R_n)_w=\R(X)_w$
and we conclude by Lemma~\ref{subring},
Lemma~\ref{vample} and
Proposition~\ref{criterion}.
\end{proof}

\begin{remark}
Observe that if we weaken our
assumption on the coefficients
$b_1$ and $b_2$, by allowing
them to have a zero coefficient at
the monomial $T_n^3$, then the GIT
chamber $\lambda([W])$ appearing
in part (2) of Lemma~\ref{vample}
is no longer full-dimensional.
In particular $R_n$ is no longer
the Cox ring $\R(X)$ since otherwise
$\lambda([W])$ would be equal to
$\Nef(X)$, a contradiction.
\end{remark}

\begin{bibdiv}
\begin{biblist}

\bib{AHL}{article}{
    AUTHOR = {Artebani, Michela},
    AUTHOR = {Hausen, J{\"u}rgen},
    AUTHOR = {Laface, Antonio},
     TITLE = {On {C}ox rings of {K}3 surfaces},
   JOURNAL = {Compos. Math.},
  FJOURNAL = {Compositio Mathematica},
    VOLUME = {146},
      YEAR = {2010},
    NUMBER = {4},
     PAGES = {964--998},
      ISSN = {0010-437X},
   MRCLASS = {14J28 (14C20)},
  MRNUMBER = {2660680 (2011f:14063)},
MRREVIEWER = {Trygve Johnsen},
       DOI = {10.1112/S0010437X09004576},
       URL = {http://dx.doi.org/10.1112/S0010437X09004576},
}

\bib{ADHL}{article}{
    AUTHOR = {Arzhantsev, Ivan},
    AUTHOR = {Derenthal, Ulrich},
    AUTHOR = {Hausen, J\"urgen},
    AUTHOR = {Laface, Antonio},
     TITLE = {Cox rings},
      YEAR = {2013},
   JOURNAL = {arXiv:1003.4229}
    EPRINT = {http://www.mathematik.uni-tuebingen.de/~hausen/CoxRings/download.php?name=coxrings.pdf},
}

\bib{BHK}{article}{
    AUTHOR = {B\"aker, Hendrik},
    AUTHOR = {Hausen, J\"urgen},
    AUTHOR = {Keicher, Simon},
     TITLE = {On Chow quotients of torus actions},
      YEAR = {2012},
   JOURNAL = {arXiv:1203.3759v1}
    EPRINT = {http://arxiv.org/pdf/1203.3759.pdf},
}

\bib{BPV}{article}{
    AUTHOR = {Barth, Wolf P.},
    AUTHOR = {Hulek, Klaus},
    AUTHOR = {Peters, Chris A. M.},
    AUTHOR = {Van de Ven, Antonius},
     TITLE = {Compact complex surfaces},
    SERIES = {Ergebnisse der Mathematik und ihrer Grenzgebiete. 3. Folge. A
              Series of Modern Surveys in Mathematics [Results in
              Mathematics and Related Areas. 3rd Series. A Series of Modern
              Surveys in Mathematics]},
    VOLUME = {4},
   EDITION = {Second},
 PUBLISHER = {Springer-Verlag},
   ADDRESS = {Berlin},
      YEAR = {2004},
     PAGES = {xii+436},
      ISBN = {3-540-00832-2},
   MRCLASS = {14Jxx (14-02 32-02 32J15 57R57)},
  MRNUMBER = {2030225 (2004m:14070)},
MRREVIEWER = {I. Dolgachev},
}

\bib{CC}{article}{
    AUTHOR = {Cools, Filip},
    AUTHOR = {Coppens, Marc}
     TITLE = {Star points on smooth hypersurfaces},
   JOURNAL = {J. Algebra},
  FJOURNAL = {Journal of Algebra},
    VOLUME = {323},
      YEAR = {2010},
    NUMBER = {1},
     PAGES = {261--286},
      ISSN = {0021-8693},
     CODEN = {JALGA4},
   MRCLASS = {14J70 (14N20)},
  MRNUMBER = {2564838 (2011b:14092)},
MRREVIEWER = {Shashikant B. Mulay},
       DOI = {10.1016/j.jalgebra.2009.09.010},
       URL = {http://dx.doi.org/10.1016/j.jalgebra.2009.09.010},
}

\bib{CP}{article}{
    AUTHOR = {Coskun, Izzet},
    AUTHOR = {Prendergast-Smith, Artie},
         TITLE = {Fano manifolds of index $n-1$ and the cone conjecture},
          YEAR = {2012},
   JOURNAL = {arXiv:1207.4046},
    EPRINT = {http://arxiv.org/pdf/1207.4046v1},
}

\bib{Do}{article}{
    AUTHOR = {Dolgachev, Igor},
     TITLE = {Integral quadratic forms: applications to algebraic geometry
              (after {V}. {N}ikulin)},
 BOOKTITLE = {Bourbaki seminar, {V}ol. 1982/83},
    SERIES = {Ast\'erisque},
    VOLUME = {105},
     PAGES = {251--278},
 PUBLISHER = {Soc. Math. France},
   ADDRESS = {Paris},
      YEAR = {1983},
   MRCLASS = {14J17 (11E12 14J28)},
  MRNUMBER = {728992 (85f:14036)},
}

\bib{Ha}{book}{
    AUTHOR = {Hartshorne, Robin},
     TITLE = {Algebraic geometry},
      NOTE = {Graduate Texts in Mathematics, No. 52},
 PUBLISHER = {Springer-Verlag},
   ADDRESS = {New York},
      YEAR = {1977},
     PAGES = {xvi+496},
      ISBN = {0-387-90244-9},
   MRCLASS = {14-01},
  MRNUMBER = {0463157 (57 \#3116)},
MRREVIEWER = {Robert Speiser},
}

\bib{Ha2}{article}{
    AUTHOR = {Hausen, J{\"u}rgen},
     TITLE = {Cox rings and combinatorics. {II}},
   JOURNAL = {Mosc. Math. J.},
  FJOURNAL = {Moscow Mathematical Journal},
    VOLUME = {8},
      YEAR = {2008},
    NUMBER = {4},
     PAGES = {711--757, 847},
      ISSN = {1609-3321},
   MRCLASS = {14C20 (14J25 14L24 14M25)},
  MRNUMBER = {2499353 (2010b:14011)},
MRREVIEWER = {Ivan V. Arzhantsev},
}

\bib{HuKe}{article}{
    AUTHOR = {Hu, Yi},
    AUTHOR = {Keel, Sean},
     TITLE = {Mori dream spaces and {GIT}},
      NOTE = {Dedicated to William Fulton on the occasion of his 60th
              birthday},
   JOURNAL = {Michigan Math. J.},
  FJOURNAL = {Michigan Mathematical Journal},
    VOLUME = {48},
      YEAR = {2000},
     PAGES = {331--348},
      ISSN = {0026-2285},
   MRCLASS = {14L24 (14E30)},
  MRNUMBER = {1786494 (2001i:14059)},
MRREVIEWER = {P. E. Newstead},
       DOI = {10.1307/mmj/1030132722},
       URL = {http://dx.doi.org/10.1307/mmj/1030132722},
}

\bib{KMM}{book}{
    AUTHOR = {Kawamata, Y.},
    AUTHOR = {Matsuda, K.},
    AUTHOR = {Matsuki, K.},
     TITLE = {Introduction to the Minimal Model Problem},
    SERIES = {in Algebraic Geometry, Sendai 1985, Adv. Stud. Pure Math.},
    VOLUME = {10},
      NOTE = {},
 PUBLISHER = {},
   ADDRESS = {},
      YEAR = {1987},
     PAGES = {283--360},
      ISBN = {},
   MRCLASS = {},
  MRNUMBER = {},
MRREVIEWER = {},
       DOI = {},
       URL = {},
}

\bib{La}{book}{
    AUTHOR = {Lazarsfeld, Robert},
     TITLE = {Positivity in algebraic geometry. {I}},
    VOLUME = {48},
      NOTE = {Classical setting: line bundles and linear series},
 PUBLISHER = {Springer-Verlag},
   ADDRESS = {Berlin},
      YEAR = {2004},
     PAGES = {xviii+387},
      ISBN = {3-540-22533-1},
   MRCLASS = {14-02 (14C20)},
  MRNUMBER = {2095471 (2005k:14001a)},
MRREVIEWER = {Mihnea Popa},
       DOI = {10.1007/978-3-642-18808-4},
       URL = {http://dx.doi.org/10.1007/978-3-642-18808-4},
}

\bib{Ni}{article}{
    AUTHOR = {Nikulin, V. V.},
     TITLE = {Integer symmetric bilinear forms and some of their geometric
              applications},
   JOURNAL = {Izv. Akad. Nauk SSSR Ser. Mat.},
  FJOURNAL = {Izvestiya Akademii Nauk SSSR. Seriya Matematicheskaya},
    VOLUME = {43},
      YEAR = {1979},
    NUMBER = {1},
     PAGES = {111--177, 238},
      ISSN = {0373-2436},
   MRCLASS = {10C05 (14G30 14J17 14J25 57M99 57R45 58C27)},
  MRNUMBER = {525944 (80j:10031)},
MRREVIEWER = {I. Dolgachev},
}

\bib{Sh}{article}{
    AUTHOR = {Shimada, Ichiro},
     TITLE = {On elliptic {$K3$} surfaces},
   JOURNAL = {Michigan Math. J.},
  FJOURNAL = {The Michigan Mathematical Journal},
    VOLUME = {47},
      YEAR = {2000},
    NUMBER = {3},
     PAGES = {423--446},
      ISSN = {0026-2285},
   MRCLASS = {14J27 (14J28)},
  MRNUMBER = {1813537 (2002c:14061)},
MRREVIEWER = {Serge Cantat},
       DOI = {10.1307/mmj/1030132587},
       URL = {http://dx.doi.org/10.1307/mmj/1030132587},
}

\bib{Shokurov}{article}{
    AUTHOR = {Shokurov, V.V.},
    TITLE = {The nonvanishing theorem},
   JOURNAL = {Math. USSR-Izv.},
  FJOURNAL = {},
    VOLUME = {26},
      YEAR = {1986},
     PAGES = {591--604},
      ISSN = {},
     CODEN = {},
   MRCLASS = {},
  MRNUMBER = {},
MRREVIEWER = {},
       DOI = {},
       URL = {},
}

\bib{W}{article}{
AUTHOR = {Wazir, Rania},
     TITLE = {Arithmetic on elliptic threefolds},
   JOURNAL = {Compos. Math.},
  FJOURNAL = {Compositio Mathematica},
    VOLUME = {140},
      YEAR = {2004},
    NUMBER = {3},
     PAGES = {567--580},
      ISSN = {0010-437X},
   MRCLASS = {11G35 (14G25)},
  MRNUMBER = {2041769 (2005a:11093)},
MRREVIEWER = {Chandan Singh Dalawat},
       DOI = {10.1112/S0010437X03000381},
       URL = {http://dx.doi.org/10.1112/S0010437X03000381},
}

\end{biblist}
\end{bibdiv}

\end{document}